\documentclass{article}

\usepackage{amsmath}
\usepackage{amsthm}
\usepackage{amssymb}
\usepackage{amsfonts}
\usepackage{pifont}
\usepackage{subeqnarray}
\usepackage{bm}
\usepackage{tikz}
\usepackage{dsfont}
\usepackage{enumerate}
\usepackage{upgreek}
\usepackage{booktabs}
\usepackage{fancyhdr}
\usepackage{multirow}
\usepackage{multicol}
\usepackage{float}
\usepackage[inline]{enumitem}   
\usepackage{cases}

\usepackage[ruled,vlined,linesnumbered,inoutnumbered]{algorithm2e}

\usepackage{bbding}
\usepackage{caption}
\usepackage{todonotes}
\usepackage{makecell} 
\usepackage{mathrsfs}

\usepackage{lineno}

\usepackage[top=2.5cm, bottom=2.5cm, left=3cm, right=3cm]{geometry} 

\usepackage{graphics,graphicx,epsf,epstopdf,subfigure}
\graphicspath{{./Figures/}}
\ifpdf
\DeclareGraphicsExtensions{.eps,.pdf,.png,.jpg}
\else
\DeclareGraphicsExtensions{.eps}
\fi

\usepackage[colorlinks,
linkcolor=blue,
anchorcolor=blue,
citecolor=blue
]{hyperref}

\usepackage{xr-hyper}

\providecommand{\U}[1]{\protect\rule{.1in}{.1in}}

	\newtheorem{theorem}{Theorem}[section]

\newtheorem{lemma}{Lemma}[section]

\newtheorem{corollary}{Corollary}[section]
\newtheorem{definition}{Definition}[section]

\newtheorem{ass}{Assumption}[section]
\newtheorem{example}{Example}[section]

\numberwithin{equation}{section}

\def\x{\mathbf{x}}
\def\z{\mathbf{z}}
\def\y{\mathbf{y}}
\def\e{\mathbf{e}}
\def\d{\mathbf{d}}

\def\proj{\mathrm{\Pi}}
\def\dist{{\mathrm{Dist}}}

\def\I{\mathcal{I}}

\def\J{\mathcal{J}}

\def\R{\mathbb{R}}

\allowdisplaybreaks
\graphicspath{ {./results/} }
\definecolor{Gray}{rgb}{0.5,0.5,0.5}

\makeatletter

\newcommand{\Rmnum}[1]{\expandafter\@slowromancap\romannumeral #1@}
\makeatother

\begin{document}




\title{New Bounds for Exact Penalized  Cardinality-Constrained Optimization with Pseudonormality Conditions}

\author{Lili Pan\thanks{Department of Mathematics, Shandong University of Technology, Zibo 255049,  China  (\texttt{panlili1979@163.com}).} 
\and Huilin Xie\thanks{Department of Mathematics, Shandong University of Technology, Zibo 255049,  China  (\texttt{17864298589@163.com}).} 
\and Xianchao Xiu\thanks{School of Mechatronic Engineering and Automation, Shanghai University, Shanghai 200444, China (\texttt{e-mail: xcxiu@shu.edu.cn}). }  
\and Jiyuan Tao \thanks{Department of Mathematical Sciences, Loyola College in Maryland, Baltimore, MD 21210, United States (\texttt{e-mail: jtao@loyola.edu}). } 
}

\date{} 

\maketitle

\begin{abstract}

\noindent
Cardinality-constrained optimization (CCO) is a popular topic in sparse learning and signal recovery, yet remains challenging due to the inherent nonconvexity and discontinuity of cardinality constraints. This paper investigates the exact penalty theory for CCO problems with general equality and inequality constraints. In particular, we extend the pseudonormality condition to the cardinality-constrained framework and establish the local exact penalization without imposing Lipschitz continuity on the objective function. We further analyze both the projected subgradient method and its stochastic variant with convergence guarantees for the derived exact penalty formulation. Compared with the existing results, we give some more precise bounds of the iterate sequence and the objective function value. 
\vskip10pt

\noindent{\bf Keywords:} {Cardinality constrained optimization, optimality condition, exact penalization, projected subgradient method.}

\end{abstract}

\section{Introduction}
In this paper, we consider the  cardinality-constrained optimization (CCO) problem
\begin{equation}\label{Pro1}
\begin{aligned}
\min_{\x\in \R^n}~~& f(\x)\\
\mathrm{s.t.}~~& g(\x)\leq 0, ~h(\x)=0,\\
&\|\x\|_0\leq s,
\end{aligned}
\end{equation}
where $f(\x): \R^n\rightarrow \R$ is a continuously differentiable and bounded from below function, 
$g(\x)=(g_1(\x), \cdots, g_m(\x))^{\top}: \R^n\rightarrow \R^m$ and  
$h(\x)=(h_1(\x), \cdots, h_p(\x))^{\top}: \R^n\rightarrow \R^p$ 
are continuously differentiable functions, $\|\x\|_0$ denotes the cardinality of $\x \in \R^n$ (number of nonzero entries of $\x$), and $s<n$ is a positive integer.	
Over the past decade, CCO has been widely applied in inverse problems \cite{bruckstein2009sparse,wang2013multi}, feature selection \cite{zou2018selective,hazimeh2023grouped}, portfolio selection \cite{bertsimas2022scalable,anis2025end} and network pruning \cite{he2023structured,benbaki2023fast}.
We refer the reader to \cite{tillmann2024cardinality} for a comprehensive survey.

\subsection{Literature review}

From an optimization perspective, problem \eqref{Pro1} poses significant challenges due to the presence of the cardinality constraint. Below, we outline three primary approaches to addressing this issue.
\begin{itemize}
\item 
\textit{The first approach} involves reformulating problem \eqref{Pro1} as a mixed-integer programming problem by introducing appropriate binary variables, which can then be solved directly using discrete optimization techniques \cite{bonami2009exact,burdakov2016mathematical}. However, these methods often suffer from limited computational efficiency, particularly for large-scale instances. 
By applying standard relaxations to the binary constraints,  problem \eqref{Pro1} is reformulated into a relaxed continuous nonlinear optimization problem with an orthogonality-type constraint. 
In \cite{vcervinka2016constraint}, some problem-tailored constraint qualifications were defined  
and shown how these constraint qualfications can be used to obtain suitable optimality conditions for problem \eqref{Pro1}. 
And then the authors in \cite{kanzow2021augmented} 
investigated the viability of using a standard safeguarded multiplier penalty method without any problem-tailored modifications to solve the reformulated problem. 
More recent works in this direction can be found in \cite{liang2024relaxed,kanzow2025sparse}.

\item 
\textit{The second approach} directly tackles problem \eqref{Pro1} using continuous optimization ideas, leveraging the special structure of cardinality constraints. 
We enumerate some works on the cardinality constrained problems \eqref{Pro1} with different constraints. 
For problem \eqref{Pro1} with nonnegative constraints, an iterative hard thresholding method with rigorous convergence analysis was proposed in \cite{pan2017convergent}. For problem \eqref{Pro1} with symmetric set constraints, an efficient method based on the framework in \cite{beck2013sparsity} was developed \cite{beck2015minimization}. For problem \eqref{Pro1} with cone constraints, optimality conditions were established under suitable constraint qualifications \cite{pan2017restricted}. In the linear case of problem \eqref{Pro1}, Lagrange duality and saddle point results were obtained using piecewise linear convex optimization \cite{zhao2022lagrange}. For problem \eqref{Pro1} with general constraints, second-order optimality conditions for the existence of augmented Lagrangian multipliers were provided in \cite{kan2024second}. Additionally, for the quadratic-constrained quadratic programming variant of problem \eqref{Pro1}, an efficient semi-smooth Newton method with convergence guarantees was proposed in \cite{li2025sparse}.

\item 
\textit{The third approach} employs penalty methods, which are highly effective for problems with complex constraints. 
Exact penalization refers to a scenario where the solution of the penalized problem coincides with that of the original problem without requiring the penalty parameter to tend to infinity. 
In particular, we say that  problem \eqref{Pro1} admits an exact penalty at the 
feasible point $\x^*$ if $\x^*$ is a strict local minimum of \eqref{Pro1} for every smooth function $f$, and there is a scalar $\tau>0$ such that $\x^*$ is also a local minimum of the following problem 
\begin{equation}\label{Pro2}
	\begin{aligned}
		\min_{\x\in \R^n}&~  F_{\tau}(\x):=f(\x)+{\tau}  \left(\|g^+(\x)\|_1+\|h(\x)\|_1\right) \\
		\mathrm{s.t.}  &~~\|\x\|_0\leq s,
	\end{aligned}	
\end{equation}
where $\|g^+(\x)\|_1=\sum_{i=1}^m\max\{0, g_i(\x)\}$ and 	$\|h(\x)\|_1=\sum_{j=1}^p |h_j(\x)|$. 
This concept has been extensively studied in literature. 
Exact penalization for nonlinear programming problems with Lipschitz continuous objective functions was investigated in \cite{nocedal2006numerical}, while exact penalization for problems with nonconvex and non-Lipschitz objective functions was explored in \cite{chen2016penalty,guo2018necessary}. 

The conditions for the admittance of an exact penalization for problem  \eqref{Pro1} are called constraint qualifications. 
For example, based on the relaxed constant positive linear dependence (CCOP-RCPLD) condition, it was shown in \cite{xiao2024optimality} that problem \eqref{Pro1} admits an exact penalization. Some other constraint qualifications for  problem  \eqref{Pro1} were proposed 
in \cite{vcervinka2016constraint,burdakov2016mathematical,pan2017optimality,kanzow2025sparse}. 
It was worth noting that the pseudonormality and quasinormality were not considered for  problem  \eqref{Pro1}, which  are two central conditons  within the taxonomy of the constraint qualifications for constrained  optimization problems. 
Although the pseudonormality implies the quasinormality in the  constrained  optimization,  quasinormality fails to provide 
the desired theoretical unification when the abstract constraint is not regular  and 
pseudonormality admits an insightful geometrical interpretation in contrast with quasinormality \cite{bertsekas2002pseudonormality}. 

To solve problem \eqref{Pro2}, several greedy methods have been proposed. For instance, the binary iterative hard thresholding (BIHT) method was applied to one-bit compressive sensing \cite{jacques2013robust}. Furthermore, the adaptive iterative hard thresholding (AIHT1) method was developed to solve the constrained least absolute deviation problem with cardinality constraints \cite{li2023adaptive}. Notably, stochastic methods require significantly less storage and computation compared to general subgradient methods \cite{liu2019one}, making stochastic gradient descent methods widely attractive for optimization problems with cardinality constraints, see \cite{nguyen2017linear,zhou2018efficient,shang2021efficient,li2024stochastic}. 	
\end{itemize}

\textit{This naturally raises the question:  Can we further explore the exact penalty theory for problem \eqref{Pro1} utilizing the concept of pseudonormality  for cardinality constrained optimization and improve the bounds for projected subgradient methods associated with problem \eqref{Pro2}?}

\subsection{Contributions}

In this paper, we first define two constraint qualifications for problem \eqref{Pro1} to ensure the exactness of the penalized problem \eqref{Pro2}, which are closely related to the stationarity of the original problem and the error bound of the feasible region. We then employ the projected subgradient method and the stochastic projected subgradient method to solve the penalized problem \eqref{Pro2}. It should be noted that the projected subgradient method is a classical method for minimizing a nondifferentiable convex function subject to a compact and convex set \cite{alber1998projected,auslender2009projected}, while the cardinality-constrained set is nonconvex, rendering the convergence analysis of the projected subgradient method highly challenging for such constrained problems. Besides, the convergence results of stochastic gradient descent established in \cite{nguyen2017linear,li2024stochastic} cannot be directly applied to stochastic iterative methods, as they rely on the assumptions that the objective function is constrained strongly convex (RSC) and smooth (RSS).

The contributions of this paper can be summarized as follows:
\begin{itemize}
\item[(1)] We propose the cardinality-constrained Mangasarian-Fromovitz constraint qualification (CC-MFCQ) and cardinality-constrained pseudonormality (CC-pseudonormality) for problem \eqref{Pro1}, where CC-MFCQ implies CC-pseudonormality. We rigorously prove that CC-pseudonormality is a sufficient condition for ensuring the exact penalization of problem \eqref{Pro2}. Notably, CC-pseudonormality does not imply the CCOP-RCPLD condition proposed in \cite{xiao2024optimality}.
\item[(2)] We develop the projected subgradient method and the stochastic projected subgradient method to deal with problem \eqref{Pro2}. Moreover, we provide a convergence analysis that does not require the objective function to be RSC or RSS. Compared with the results in \cite{liu2019one}, our findings yield tighter bounds on the iterative sequence and smaller corresponding objective function values, as illustrated in Table \ref{tab-3}.
\end{itemize}

\begin{table}[h]
\centering
\renewcommand{\arraystretch}{1.1}
\caption{New bounds compared with \cite{liu2019one}, where the differences are marked in \textcolor{red}{red}.}\label{tab-3}
\begin{tabular}{|c|c|}
\hline
Previous results   & Our results \\ \hline \hline 
\multirow{2}*{\small{$\|\x^{k+1}-\x^*\|^2\leq 6L_{\tau}^2\sum_{i=1}^{k}\alpha_i^2+2L_{\tau}\|\x^*\|\sum_{i=1}^{k}\alpha_i+\epsilon^2$}} & \multirow{2}*{\small{$\|\x^{k+1}-\x^*\|^2\leq \textcolor{red}{L_{\tau}^2\sum_{i=1}^{k}\alpha_i^2}+2L_{\tau}\|\x^*\|\sum_{i=1}^{k}\alpha_i+\epsilon^2$}}\\
& \\
\multirow{2}*{\small{$\underline{F}^{k}_{\tau}-F_{\tau}^*\leq \frac{3L_{\tau}^2\sum_{i=1}^{k}\alpha_i^2}{\sum_{i=1}^{k}\alpha_i}+\frac{\epsilon^2}{2\sum_{i=1}^{k}\alpha_i}
	+L_{\tau}\|\x^*\|$}}    &
\multirow{2}*{\small{$\underline{F}^{k}_{\tau}-F_{\tau}^*\leq 	\textcolor{red}{\frac{L_{\tau}^2\sum_{i=1}^{k}\alpha_i^2}{2\sum_{i=1}^{k}\alpha_i}}+\frac{\epsilon^2}{2\sum_{i=1}^{k}\alpha_i}
+L_{\tau}\|\x^*\|$}} \\
& \\ \hline
\multirow{2}*{\small{$\mathbb{E}\left[F_{\tau}\left(\frac{1}{k}\sum_{i=1}^{k}\x^i\right)-F_{\tau}^*\right]\leq {3\beta M_2}+\frac{\epsilon^2}{2\beta k} +M_1\|\x^*\|$}}   & \multirow{2}*{\small{$\mathbb{E}\left[F_{\tau}\left(\frac{1}{k}\sum_{i=1}^{k}\x^i\right)-F_{\tau}^*\right]\leq \textcolor{red}{\frac{\beta M_2}{2}}+\frac{\epsilon^2}{2\beta k} +M_1\|\x^*\|$}}  \\ 
& \\ \hline
\multirow{2}*{\small{$\mathbb{E}\left(\|\x^{k+1}-\x^*\|^2\right)\leq \frac{24M_2}{\sigma^2(k+1)}+\frac{2M_1\|\x^*\|}{\sigma}$}}   & \multirow{2}*{\small{$\mathbb{E}\left(\|\x^{k+1}-\x^*\|^2\right)\leq \textcolor{red}{\frac{4M_2}{\sigma^2(k+1)}}+\frac{2M_1\|\x^*\|}{\sigma}$}}  \\ 
& \\
\multirow{2}*{\small{$\mathbb{E}\left[F_{\tau}\left(\sum_{i=1}^{k}\frac{2i}{k(k+1)}\x^i\right)-F_{\tau}^*\right]\leq \frac{12M_2}{\sigma(k+1)}+M_1\|\x^*\|$}}   & \multirow{2}*{\small{$\mathbb{E}\left[F_{\tau}\left(\sum_{i=1}^{k}\frac{2i}{k(k+1)}\x^i\right)-F_{\tau}^*\right]\leq \textcolor{red}{\frac{2M_2}{\sigma(k+1)}}+M_1\|\x^*\|$}}  \\ 
& \\
\hline
\end{tabular}
\end{table}

\subsection{Organization}	 

The structure of this paper is outlined as follows. Section \ref{Section2} reviews notations and basic properties. Section \ref{Section3} presents the exact penalty theory for problem \eqref{Pro1}. Section \ref{Section4} provides the convergence results for the projected subgradient method and corresponding stochastic projected subgradient method. 
Section \ref{Section6} concludes this paper.

\section{Preliminaries}\label{Section2}

\subsection{Notations}	

For a vector $\x\in \R^n$, the support set of $\x$ is denoted as $\Gamma(\x) := \{i\in\{1, \cdots,n\}\mid  x_i\neq 0\}$. Then $\|\x\|_0$ is the cardinality of $\Gamma(\x)$, or $\|\x\|_0=|\Gamma(\x)|$. For an index set $J\subseteq \{1, \cdots, n\}$, let $\x_J\in \R^n$ be the vector equal to $\x$ on $J$ and zero elsewhere, and let $\nabla_J f(\x):=(\nabla f(\x))_J$. Besides,
$\overline{J}$ is the complement of $J$ in $\{1,\cdots,n\}$. Denote the feasible region of problem \eqref{Pro1} as $\Omega\cap S$ with $\Omega := \{\x\in \R^n\mid g(\x)\leq 0, ~h(\x)=0\}$ and $S := \{\x\in \R^n \mid  \|\x\|_0\leq s\}$. Define $\J_s := \{J\subseteq \{1,\cdots,n\}\mid |J|\leq s\}$, then $S$ admits the decomposition $S=\cup_{J\in \J_s}S_J$, where $S_J := \{\x\in \R^n\mid \Gamma(\x)\subseteq J\}$. For $\x^*\in \Omega\cap S$, denote $\I(\x^*):= \{i\in\{1,\cdots,m\}\mid g_i(\x^*)=0\}$ and $\J(\x^*):=\big\{J\subseteq \{1,\cdots,n\}\mid \Gamma(\x^*)\subseteq J\in\J_s \big\}$. For a closed set $A$, the projection of $\x\in \R^n$ onto $A$ is defined as $\proj_A(\x):={\rm argmin}_{\y\in A} ~\|\y-\x\|^2$, and the distance from $\x$ to  $A$ is $\dist(\x,A):=\|\y-\x\|$ with $\y\in \proj_A(\x)$. 
In particular, the projection $\proj_S(\x)$  sets all but the largest (in magnitude) $s$ elements of $\x$ to zero.

\subsection{Tangent and normal cones}	
Let $A\subseteq \R^{n}$ be an arbitrary closed set and  $\x^*\in A$. According to \cite{rockafellar1998variational}, the Bouligand (also contingent or tangent) cone ${T}_{A}(\x^*)$ to set $A$ at $\x^*$ is defined as
\begin{equation*}
	T_{A}(\x^*):=\left\{\mathbf{d}\in\mathbb{R}^{n}\mid \exists \{\x^k\}\xrightarrow{A} \x^* ~\mathrm{and}~ \{t_k\}\downarrow 0
	~\mathrm{such~that} \underset{k\rightarrow \infty}{\lim}\frac{\x^k-\x^*}{t_k}=\mathbf{d}~\right\},
\end{equation*}
	where $\{\x^k\}\xrightarrow{A} \x^*$ means that $\{\x^k\} \subseteq A$ and $\{\x^k\}$ converges to
	$\x^*$.
The Fr$\acute{e}$chet (also regular) normal cone is defined as the polar of the Bouligand tangent cone, i.e.,
\begin{equation}\nonumber
			\widehat{N}_{A}(\x^*):=T_{A}(\x^*)^{\circ}=\big\{\mathbf{v}\in \R^n \mid \langle \mathbf{v},\mathbf{d}\rangle\leq 0, ~\mathbf{d}\in T_{A}(\x^*)\big\}.		
	\end{equation}
The Mordukhovich (also limiting or basic) normal cone is given by
	\begin{equation}\nonumber
		N_{A}(\x^*):= \big\{ \mathbf{d}\in \R^n\mid  \exists  \{\x^k\}\xrightarrow{A} \x^*,~ \mathbf{d}^k\in\widehat{N}_{A}(\x^k) ~\mathrm{such~that}~ \mathbf{d}^k\to \mathbf{d}\big\}.
	\end{equation}
    From \cite{pan2017restricted,pan2017optimality}, the Bouligand tangent cone, Fr$\acute{e}$chet normal cone, and Mordukhovich normal cone for $S$ take the form
	\begin{equation}\nonumber
		\begin{aligned}
			{T}_{S}(\x^*) &=\left\{\begin{array}{*{20}c}
				S_{\Gamma^*}, & \textup{if}~~|\Gamma^*|=s,\\
				\underset{J\in \mathcal{J}^*}{\bigcup}S_{J}, & \textup{if}~~|\Gamma^*|<s,
			\end{array}\right.\\
			\widehat{N}_{S}(\x^*)&=\left\{\begin{array}{*{20}c}
				S_{\overline{\Gamma}^*}, & \textup{if}~~|\Gamma^*|=s, \\
				\{0\}, & \textup{if}~~|\Gamma^*|<s,\end{array}\right.\\
			N_{S}(\x^*)&= \left\{\begin{array}{cl}
				S_{\overline{\Gamma}^*}, & \textup{if}~~|\Gamma^*|=s,\\
				\underset{J\in \mathcal{J}^*}{\bigcup}S_{\overline{J}}, & \textup{if}~~|\Gamma^*|<s,
			\end{array}\right.
		\end{aligned}
	\end{equation}
	where $\Gamma^*:=\Gamma(\x^*)$. The linearized cone of $\Omega$ at $\x^*\in \Omega$ is
	\begin{equation*}
		L_{\Omega}(\x^*) := \big\{\d\in \R^n\mid \langle\nabla g_i(\x^*),\d \rangle\leq 0, ~\forall i\in \I(\x^*), ~\langle\nabla h_j(\x^*),\d \rangle= 0,~\forall j=1, \cdots,p\big\}
	\end{equation*}
	and its polar cone  is
\begin{equation*}
L_{\Omega}(\x^*)^{\circ}:=\left\{\sum_{i\in \I(\x^*)}\lambda_i\nabla g_i(\x^*)
	+\sum_{i=1}^p \mu_j\nabla h_j(\x^*)\mid \lambda_i
	\geq 0,~\forall i\in \I(\x^*),~\mu_j\in \R,~j=1, \cdots, p\right\}. 	
    \end{equation*}

\subsection{Constraint qualifications}	

Consider the following nonlinear programming problem
	\begin{equation}\label{NP}
			\min~ f(\x) \quad \mathrm{s.t.} \quad  \x \in \Omega.
	\end{equation}
	
	\begin{definition}\label{def2.1}
		We say that $\x^*\in \Omega$ satisfies  Mangasarian-Fromovitz constraint qualification  (MFCQ) if  $\nabla h_1(\x^*),\cdots, \nabla h_p(\x^*)$ are linearly independent and there exists $\d\in \R^n$ such that
			\begin{equation*}
            \langle \nabla g_i(\x^*),\d\rangle<0,~\forall i\in \I(\x^*),~\langle \nabla h_j(\x^*),\d\rangle=0,~j=1,\cdots,p.
                \end{equation*}
	\end{definition}

 	\begin{definition}\label{def2.2}
		We say that $\x^*\in \Omega$ is pseudonormal if there do not exist  $\lambda_1,\cdots,\lambda_m,\mu_1,\cdots,\mu_p$ and $\{\x^k\}\subseteq \Omega$ such that
			\begin{itemize}
				\item[(1)] $-\left[\sum_{i=1}^m\lambda_i\nabla g_i(\x^*)+\sum_{j=1}^p\mu_j\nabla h_j(\x^*)\right]=0$;
				\item[(2)]  $\lambda_i\geq 0$  for all $i=1,\cdots,m$, and $\lambda_i=0$  for all $i\notin \I(\x^*)$;
				\item[(3)]  $\{\x^k\}\to \x^*$ and $\sum_{i=1}^m\lambda_ig_i(\x^k)+\sum_{j=1}^p\mu_jh_j(\x^k)>0$, $\forall k$.
			\end{itemize}
	\end{definition}

\begin{definition}\label{def2.3}
		We say that $\x^*\in \Omega$ satisfies the Abadie CQ if $T_{\Omega}(\x^*)=L_{\Omega}(\x^*)$.
	\end{definition}

Following \cite{andreani2012relaxed,guo2014new}, the relationships among the above three CQs are 
\begin{equation*}	\mathrm{MFCQ}\quad\Rightarrow\quad\mathrm{pseudonormality}\quad\Rightarrow\quad\mathrm{Abadie ~CQ}.
\end{equation*}

\subsection{Optimality conditions}	

Based on the above MFCQ, the following Karush-Kuhn-Tucker (KKT) conditions hold.
\begin{lemma} If the MFCQ holds at $\x^*\in \Omega$, then $\x^*$ is a KKT point of problem \eqref{NP}, that is,
\begin{equation*}	
-\nabla f(\x^*)\in L_{\Omega}(\x^*)^{\circ}. 
\end{equation*}
\end{lemma}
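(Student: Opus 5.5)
The statement tacitly assumes that $\x^*$ is a local minimizer of problem \eqref{NP}; without that assumption no stationarity could be expected, so I read the lemma that way. The plan is to pass through the geometric (Bouligand) first-order condition and then use the chain of constraint qualifications recorded after Definition~\ref{def2.3} to replace the tangent cone by the linearized cone.

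\emph{Step 1 (geometric stationarity).} I show that $-\nabla f(\x^*)\in T_\Omega(\x^*)^\circ=\widehat N_\Omega(\x^*)$. Take $\d\in T_\Omega(\x^*)$, so there are $\x^k\in\Omega$, $\x^k\to\x^*$, and $t_k\downarrow0$ with $(\x^k-\x^*)/t_k\to\d$. Local optimality and differentiability of $f$ give
\begin{equation*}
0\le \frac{f(\x^k)-f(\x^*)}{t_k}=\Big\langle \nabla f(\x^*),\frac{\x^k-\x^*}{t_k}\Big\rangle+\frac{o(\|\x^k-\x^*\|)}{t_k}.
\end{equation*}
Letting $k\to\infty$ yields $\langle\nabla f(\x^*),\d\rangle\ge0$. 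Since $\d\in T_\Omega(\x^*)$ was arbitrary, this is $-\nabla f(\x^*)\in T_\Omega(\x^*)^\circ$.

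\emph{Step 2 (replace $T_\Omega$ by $L_\Omega$).} By the implications $\mathrm{MFCQ}\Rightarrow$ pseudonormality $\Rightarrow$ Abadie CQ quoted from \cite{andreani2012relaxed,guo2014new}, MFCQ at $\x^*$ gives $T_\Omega(\x^*)=L_\Omega(\x^*)$. Hence $-\nabla f(\x^*)\in L_\Omega(\x^*)^\circ$ in the sense of the polar cone. If a self-contained argument is preferred, I would prove the nontrivial inclusion $L_\Omega(\x^*)\subseteq T_\Omega(\x^*)$ directly, as follows.
\begin{itemize}
\item Let $\bar\d$ be the MFCQ direction. For $\d\in L_\Omega(\x^*)$ and $\varepsilon>0$, the vector $\d_\varepsilon=\d+\varepsilon\bar\d$ satisfies $\langle\nabla g_i(\x^*),\d_\varepsilon\rangle<0$ for all $i\in\I(\x^*)$, and $\nabla h(\x^*)^\top\d_\varepsilon=0$.
\item Because the $\nabla h_j(\x^*)$ are linearly independent, the implicit function theorem provides a $C^1$ curve $\x(t)=\x^*+t\d_\varepsilon+\nabla h(\x^*)\,\mathbf{u}(t)$ with $h(\x(t))=0$, $\mathbf{u}(0)=0$ and $\mathbf{u}'(0)=0$.
\item Along this curve the active $g_i$ decrease to first order, and the inactive $g_i$ stay negative by continuity. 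Hence $\x(t)\in\Omega$ for small $t>0$, and so $\d_\varepsilon\in T_\Omega(\x^*)$.
\item Since the tangent cone is closed, letting $\varepsilon\downarrow0$ gives $\d\in T_\Omega(\x^*)$.
\end{itemize}
The reverse inclusion $T_\Omega(\x^*)\subseteq L_\Omega(\x^*)$ is a routine linearization argument.

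\emph{Step 3 (explicit form of the polar).} It remains to identify the polar of the polyhedral cone $L_\Omega(\x^*)$ with the finitely generated cone $\{\sum_{i\in\I(\x^*)}\lambda_i\nabla g_i(\x^*)+\sum_j\mu_j\nabla h_j(\x^*)\mid \lambda_i\ge0,\ \mu_j\in\R\}$. This is Farkas' lemma. One writes each equality constraint as two inequalities, and uses that finitely generated cones are closed, so the bipolar theorem applies. Consequently multipliers $\lambda_i\ge0$ and $\mu_j$ exist with $\nabla f(\x^*)+\sum_{i\in\I(\x^*)}\lambda_i\nabla g_i(\x^*)+\sum_j\mu_j\nabla h_j(\x^*)=0$, which is the KKT condition.

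I expect Step 2 to be the main obstacle if it is done from scratch: it requires the implicit-function construction of a feasible curve and the perturbation by $\varepsilon\bar\d$. Citing the established chain $\mathrm{MFCQ}\Rightarrow$ Abadie CQ disposes of it. Steps 1 and 3 are standard.
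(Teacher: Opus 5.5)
Your proof is correct. It establishes the tangent-cone stationarity $-\nabla f(\x^*)\in T_\Omega(\x^*)^\circ$ from local optimality, uses MFCQ to get $L_\Omega(\x^*)\subseteq T_\Omega(\x^*)$ (only this inclusion is needed, since polarity reverses inclusions), and applies Farkas' lemma to identify $L_\Omega(\x^*)^\circ$ with the multiplier cone. You are also right that the lemma tacitly requires $\x^*$ to be a local minimizer of \eqref{NP}. The paper gives no proof and treats the lemma as a textbook fact, implicitly justified by the chain $\mathrm{MFCQ}\Rightarrow$ pseudonormality $\Rightarrow$ Abadie CQ quoted just before it, which is essentially the route you take.
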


Furthermore, inspired by the pseudonormality, it is not hard to obtain the exact penalization theory.

\begin{lemma} If $\x^*\in \Omega$ is pseudonormal, then $\Omega$ admits an exact penalty at $\x^*$. That is, if $\x^*$ is a strictly local minimum of the smooth function $f$ over $\Omega$, there exists $\tau>0$ such that $\x^*$ is also a local minimum of  $F_{\tau}(\x)$. 
\end{lemma}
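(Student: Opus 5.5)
We argue by contradiction, following Bertsekas' approach via the Fritz John conditions with the enhanced complementary-slackness property. Here $F_\tau(\x)=f(\x)+\tau(\|g^+(\x)\|_1+\|h(\x)\|_1)$ with no cardinality constraint. Suppose $\x^*$ is a strict local minimum of $f$ over $\Omega$, say on $\Omega\cap \bar B(\x^*,\varepsilon)$, but for every $k\in\mathbb{N}$ the point $\x^*$ is not a local minimum of $F_k$. Then for each $k$ there is $\y^k$ with $\|\y^k-\x^*\|<1/k$ and $F_k(\y^k)<F_k(\x^*)=f(\x^*)$. Working on the compact ball $\bar B:=\bar B(\x^*,\varepsilon)$, I would select a minimizer $\x^k$ of the auxiliary function
\begin{equation*}
\Phi_k(\x):=f(\x)+k\left(\|g^+(\x)\|_1+\|h(\x)\|_1\right)+\tfrac12\|\x-\x^*\|^2
\end{equation*}
over $\bar B$. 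Since $\Phi_k(\x^k)\le \Phi_k(\x^*)=f(\x^*)$ and $f$ is bounded on $\bar B$, we get $\|g^+(\x^k)\|_1+\|h(\x^k)\|_1\to 0$. Any cluster point $\bar\x$ therefore lies in $\Omega\cap\bar B$ and satisfies $f(\bar\x)+\tfrac12\|\bar\x-\x^*\|^2\le f(\x^*)$. Strictness of the minimum then forces $\bar\x=\x^*$, so $\x^k\to\x^*$ and $\x^k$ lies in the interior of $\bar B$ for large $k$.

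Next I will extract multipliers. At the unconstrained local minimizer $\x^k$ of $\Phi_k$, the Clarke subdifferential calculus applied to the max/abs terms yields
\begin{equation*}
0=\nabla f(\x^k)+\sum_i k\,\xi_i^k\nabla g_i(\x^k)+\sum_j k\,\eta_j^k\nabla h_j(\x^k)+(\x^k-\x^*).
\end{equation*}
Here $\xi_i^k\in[0,1]$, with $\xi_i^k=1$ if $g_i(\x^k)>0$ and $\xi_i^k=0$ if $g_i(\x^k)<0$; likewise $\eta_j^k=\mathrm{sign}(h_j(\x^k))$ when $h_j(\x^k)\neq0$, and $\eta_j^k\in[-1,1]$ otherwise. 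The key point is that $\x^k\notin\Omega$ for large $k$. Indeed, if $\x^k\in\Omega$, then $f(\x^k)+\tfrac12\|\x^k-\x^*\|^2\le f(\x^*)$ with $\x^k$ near $\x^*$, which forces $\x^k=\x^*$ by strictness. But $\x^*$ cannot minimize $\Phi_k$ locally: along the points $\y^m$ with $m\ge k$ we have $F_m\ge F_k$, so $F_k(\y^m)\le F_m(\y^m)<f(\x^*)$. The quadratic term spoils this conclusion, however, so I expect to need the more careful choice used by Bertsekas. Concretely, I would replace $\tfrac12\|\x-\x^*\|^2$ by a term scaled with $\|\y^k-\x^*\|$, or argue directly on the sequence $\y^k$ via an Ekeland-type variational principle, so that the minimizer remains infeasible.

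Set $\delta^k:=\|(1,k\xi^k,k\eta^k)\|$ and normalize. Passing to a subsequence, $k\xi^k/\delta^k\to\lambda\ge0$, $k\eta^k/\delta^k\to\mu$, and $1/\delta^k\to\lambda_0$. Several facts then follow:
\begin{itemize}
\item $\lambda_i=0$ for $i\notin\I(\x^*)$, since $g_i(\x^k)<0$ eventually for such $i$.
\item If $\lambda_0>0$, the multipliers are bounded, and the limit gives a KKT relation, but this is not immediately a contradiction. Instead, I would show that $\lambda_0=0$ necessarily, using that $F_k$ fails exactness for every $k$; this is the step I expect to be delicate.
\item With $\lambda_0=0$, dividing the stationarity equation by $\delta^k$ gives $\sum\lambda_i\nabla g_i(\x^*)+\sum\mu_j\nabla h_j(\x^*)=0$, with $(\lambda,\mu)\neq0$.
\item Finally, whenever $\lambda_i>0$ we have $g_i(\x^k)>0$, and whenever $\mu_j\neq0$ we have $\mu_jh_j(\x^k)>0$, for large $k$. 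Hence $\sum\lambda_ig_i(\x^k)+\sum\mu_jh_j(\x^k)>0$.
\end{itemize}
These conditions match Definition \ref{def2.2}, except that there $\{\x^k\}$ was required to lie in $\Omega$. That requirement is presumably a typo, since condition (3) is impossible for points of $\Omega$; the sequence is meant to be merely in $\R^n$. Under that reading, the conditions contradict pseudonormality.

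The main obstacle is ruling out $\lambda_0>0$. My plan follows Bertsekas--Ozdaglar. I would show that pseudonormality implies a local error bound $\dist(\x,\Omega)\le c\left(\|g^+(\x)\|_1+\|h(\x)\|_1\right)$ near $\x^*$, by running the contradiction argument above on the distance-minimization problem instead of $f$. I would then conclude via the standard Clarke exact-penalty lemma. That lemma is stated for Lipschitz objectives, but $f$ is $C^1$ and hence locally Lipschitz near $\x^*$, so it applies locally with $\tau>c\,\mathrm{Lip}(f)$. This error-bound route is the one I would commit to, since it avoids the $\lambda_0$ issue entirely.
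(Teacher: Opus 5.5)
Your proposal does not yet prove the lemma. The route you commit to (pseudonormality $\Rightarrow$ local error bound $\dist(\x,\Omega)\le c\,P(\x)$ with $P:=\|g^+\|_1+\|h\|_1$, then Clarke's penalty principle) is sound in principle, and its second half is routine. The error bound, however, carries the entire content, and you only assert it. Proving it means three steps. First, take $\x^k\to\x^*$ with $\dist(\x^k,\Omega)>k\,P(\x^k)$. Second, apply Ekeland's principle to $P$ to get infeasible points $\y^k\to\x^*$ with $\dist(0,\partial P(\y^k))\le 2/k$. Third, extract a pseudonormality violation from the multipliers in $\partial P(\y^k)$. That third step is exactly the sign argument you could not close, so the detour relocates the difficulty rather than avoiding it. If completed, what it would buy is exactness also at non-strict minimizers, with an explicit threshold $\tau>c\,\mathrm{Lip}(f)$.

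The direct argument you abandoned is the one the paper relies on. It cites \cite[Proposition 4.2]{bertsekas2002pseudonormality}, and the same argument appears in the proof of the CC-pseudonormality theorem in Section~\ref{Section3}. It closes after three corrections.

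First, drop the proximal term and let $\x^k$ minimize $F_k$ itself over $\bar B(\x^*,\varepsilon)$. Since $\x^*$ is not a local minimizer of $F_k$, we get $F_k(\x^k)<f(\x^*)$, and $\x^k\in\Omega$ would give $f(\x^k)=F_k(\x^k)<f(\x^*)$. Hence $\x^k\notin\Omega$ automatically, and strictness is only needed to get $\x^k\to\x^*$.

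Second, the $\lambda_0$ issue you call the main obstacle does not arise. Once $\x^k\notin\Omega$, some $\xi_i^k=1$ or $|\eta_j^k|=1$, so $\delta^k\ge k$ and $\lambda_0=0$. More simply, divide the stationarity relation by $k$: $\xi^k\in[0,1]^m$ and $\eta^k\in[-1,1]^p$ are bounded, and $\frac1k\nabla f(\x^k)\to0$.

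Third, your claim that $\lambda_i>0$ forces $g_i(\x^k)>0$ is false for the $\ell_1$ penalty, because $\xi_i^k\in(0,1]$ is allowed when $g_i(\x^k)=0$. You only get $\lambda_ig_i(\x^k)\ge0$ and $\mu_jh_j(\x^k)\ge0$ for large $k$, and the sum may vanish. The missing step is a pigeonhole selection. Every $\x^k$ is infeasible and there are finitely many constraints, so pass to a subsequence on which one fixed index has $\xi_{i_0}^k=1$ and $g_{i_0}(\x^k)>0$ (or $|\eta_{j_0}^k|=1$ and $h_{j_0}(\x^k)\neq0$). Then $\lambda_{i_0}=1$ (or $\eta_{j_0}^k=\mu_{j_0}\in\{\pm1\}$ eventually). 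That term is strictly positive while all others are nonnegative, which gives condition (3) and $(\lambda,\mu)\neq0$ at once.

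Your reading of Definition~\ref{def2.2} is correct. With $\{\x^k\}\subseteq\Omega$, condition (3) can never be satisfied, so the sequence must range over $\R^n$, just as $S$ plays that role in Definition~\ref{R-MF}.
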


See \cite[Proposition 4.2]{bertsekas2002pseudonormality} for more details.

    
\section{Exact penalization for problem \eqref{Pro1}}\label{Section3}

A variety of CQs for the nonlinear programming problem \eqref{NP} have been extended to analyze the corresponding CCO problem. In \cite{xiao2024optimality}, it was shown that the RCPLD condition provides a sufficient condition for error bounds of problem \eqref{Pro1}, which in turn implies exact penalization when the objective function is Lipschitz continuous. In addition, the relationships between RCPLD and other CQs were discussed and established.

\subsection{Constraint qualifications}

\begin{definition}\label{def3.1}
We say that the cardinality constrained Mangasarian-Fromovitz constraint qualification (CC-MFCQ) holds at $\x^*\in \Omega \cap S$, if for each $J\in \J(\x^*)$, the gradients $\nabla_J h_1(\x^*),\cdots,\nabla_J h_p(\x^*)$ are linearly independent, there exists $\d\in S_J$ such that
\begin{equation*}	
\langle \nabla g_i(\x^*),\d\rangle<0, ~i\in \I(\x^*), ~\langle \nabla h_j(\x^*),\d\rangle=0,~j=1,\cdots,p.
\end{equation*}
\end{definition}

From Motzkin's theorem, for each $J\in \J(\x^*)$, $\nabla_J g_i(\x^*)$, $i\in \I(\x^*), \nabla_J h_1(\x^*),\cdots,\nabla_J h_p(\x^*)$  are positively independent. That is, if
\begin{equation*}	
\sum_{i\in \I(\x^*)}\lambda_i\nabla_J g_i(\x^*)+ \sum_{j=1}^p\mu_j\nabla_J h_j(\x^*)= 0
\end{equation*}
and $\lambda_i\geq 0$, $i\in \I(\x^*)$ and $\mu_j\in \R$, $j=1,\cdots,p$, then $\lambda_i=0$, $i\in \I(\x^*)$ and $\mu_j=0$, $j=1,\cdots,p$.
Furthermore, for each $J\in \J(\x^*)$, $\nabla g_i(\x^*)$,  $i\in \I(\x^*)$, $\nabla h_1(\x^*),\cdots,\nabla h_p(\x^*)$  and $\e_k$, $k\in \overline{J}$  are positively independent. Specifically, if
\begin{equation*}	
\sum_{i\in \I(\x^*)}\lambda_i\nabla g_i(\x^*)+ \sum_{j=1}^p\mu_j\nabla_J h_j(\x^*)+\sum_{k\in \overline{J}} \gamma_k\e_k= 0,
\end{equation*}
where $\lambda_i\geq 0$, $i\in \I(\x^*)$,  $\mu_j\in \R$, $j=1,\cdots,p$ and $\gamma_k\in \R$, $k\in \overline{J}$, then $\lambda_i=0$, $i\in \I(\x^*)$ and $\mu_j=0$, $j=1,\cdots,p$ and $\gamma_k=0$, $k\in \overline{J}$, where $\e_k$ denotes the $k$-th unit vector in $\R^n$.

The condition introduced in \cite{vcervinka2016constraint} (also called restricted MFCQ in \cite{pan2017optimality} or restricted Robinson CQ in \cite{pan2017restricted}) can be expressed as  $\nabla_{\Gamma(\x^*)} g_i(\x^*)$ $(i\in \I(\x^*))$ are positively independent. It is straightforward to verify that these conditions imply the CC-MFCQ defined above, but not vice versa as showed in the following example.

\begin{example}
Let $\x^*=(1,1,0,0)^{\top}$, $s=3$ and 
\begin{equation*}	
\Omega=\{\x\in \R^4\mid g_1(\x)=x_1+x_3+x_4-1\leq 0,~g_2(\x)=x_3+x_4 \leq 0, ~
		g_3(\x)=x_2-1\leq 0\}.
\end{equation*}
\end{example}

By simple calculation, $\Gamma(\x^*)=\{1,2\}$, $\I(\x^*)=\{1,2,3\}$,  $\J(\x^*)=\{\{1,2,3\},\{1,2,4\}\}$, and
\begin{equation*}
	\nabla g_1(\x^*)=\left(\begin{array}{l} 1 \\0\\1\\1\end{array}\right),~
	\nabla g_2(\x^*)=\left(\begin{array}{l} 0 \\0\\1\\1\end{array}\right),~
	\nabla g_3(\x^*)=\left(\begin{array}{l} 0 \\1\\0\\0\end{array}\right).
\end{equation*}
One can check that  $\nabla_{\Gamma(\x^*)} g_i(\x^*)$ ($i\in \I(\x^*)$) is positively dependent, so restricted MFCQ fails. However, $\nabla_{J} g_i(\x^*)$ ($i\in \I(\x^*)$) for any $J\in \J(\x^*)$ is positively independent, and thus CC-MFCQ holds.

Let $\x^*\in \Omega\cap S$ satisfy  CC-MFCQ, from \cite[Theorem 6.14]{rockafellar1998variational}, it holds
\begin{equation}
\label{M-normal-cap} N_{\Omega\cap S}(x^*)\subseteq N_{\Omega}(\x^*)+N_{S}(\x^*)=L_{\Omega}(\x^*)^{\circ}+N_{S}(\x^*),
\end{equation}
where the equality dues to the fact that CC-MFCQ of $\Omega\cap S$ results in MFCQ of $\Omega$.
Using \eqref{M-normal-cap} and \cite[Theorem 6.12]{rockafellar1998variational}, we obtain the following optimality condition for problem \eqref{Pro1}.
\begin{theorem}
Suppose that $\x^*\in \Omega\cap S$ is a local minimizer of problem \eqref{Pro1} and CC-MFCQ holds at $\x^*$. Then $\x^*$  is a KKT  point of problem \eqref{Pro1}, i.e.,
there exist $\lambda^*\in \R^m$ and $\mu^*\in \R^p$ such that
\begin{eqnarray*} \label{BCMKKT}
\begin{cases}
-\nabla f(\x^*)-\sum_{i=1}^m \lambda_i^* \nabla g_i(\x^*)-\sum_{j=1}^p \mu_j^* \nabla h_j(\x^*)\in \widehat{N}_S(\x^*), \\
\lambda^*_i\geq 0,~g_i(\x^*)\leq 0,~ \lambda^*_ig_i(\x^*)=0,~i=1,\cdots,m,\\ 
\|\x^*\|_0\leq s. 
\end{cases}
\end{eqnarray*}
\end{theorem}

Now, we extend pseudonormality  in Definition \ref{def2.2}  to problem \eqref{Pro1}.

\begin{definition}\label{R-MF}
We say that $\x^*$ is CC-pseudonormal if here do not exist $\lambda_1,\cdots,\lambda_m,\mu_1, \cdots,\mu_p$ and sequence $\{\x^k\}\subseteq S$ such that
		\begin{itemize}
			\item[(1)] $-\left[\sum_{i=1}^m\lambda_i\nabla g_i(\x^*)+\sum_{j=1}^p\mu_j\nabla h_j(\x^*)\right]\in N_S(\x^*)$;
			\item[(2)] $\lambda_i\geq 0$ for all $i=1,\cdots,m$, and $\lambda_i=0$ for all $i\notin \I(\x^*)$;
			\item[(3)] $\{\x^k\}\to \x^*$ and $\sum_{i=1}^m\lambda_ig_i(\x^k)+\sum_{j=1}^p\mu_jh_j(\x^k)>0$, $\forall k$.
		\end{itemize}
	\end{definition}

\subsection{Theoretical guarantee}

In \cite{xiao2024optimality}, local error boundedness and local exact penalization for problem \eqref{Pro1} were established under RCPLD and Lipschitz continuity of the objective function. Notice that pseudonormality does not imply RCPLD \cite{andreani2012relaxed} in nonlinear programming problems. 
In this regard, we prove local exact penalization under CC-pseudonormality without requiring Lipschitz continuity.

\begin{theorem}
Assume that $\x^*\in \Omega\cap S$ is CC-pseudonormal and is a strictly local minimizer of problem \eqref{Pro1}. Then there exists $\tau^*>0$ such that $\x^*$ is also a local minimizer of problem \eqref{Pro2} for all   $\tau>\tau^*$.
\end{theorem}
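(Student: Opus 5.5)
We argue by contradiction, following the penalty-sequence argument behind \cite[Proposition 4.2]{bertsekas2002pseudonormality}. Suppose no such $\tau^*$ exists. Then there are $\tau_k\to\infty$ such that $\x^*$ is not a local minimizer of $F_{\tau_k}$ over $S$.

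Since $\x^*$ is a strict local minimizer of \eqref{Pro1}, pick $\varepsilon>0$ with $f(\x)>f(\x^*)$ for all $\x\in\Omega\cap S$ with $0<\|\x-\x^*\|\le\varepsilon$. Set $B_\varepsilon:=\{\x\mid\|\x-\x^*\|\le\varepsilon\}$. For each $k$ we minimize the localized penalty
\begin{equation*}
F_{\tau_k}(\x)+\tfrac12\|\x-\x^*\|^2 \quad\text{over}\quad S\cap B_\varepsilon .
\end{equation*}
This set is compact and nonempty, so a minimizer $\x^k$ exists. Since $F_{\tau_k}(\x^k)+\tfrac12\|\x^k-\x^*\|^2\le f(\x^*)$ and $f$ is bounded below on $B_\varepsilon$, the penalty term $\|g^+(\x^k)\|_1+\|h(\x^k)\|_1$ tends to $0$. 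Any cluster point $\bar\x$ therefore lies in $\Omega\cap S\cap B_\varepsilon$ (recall $S$ is closed) and satisfies $f(\bar\x)+\tfrac12\|\bar\x-\x^*\|^2\le f(\x^*)$. Strict minimality forces $\bar\x=\x^*$, so $\x^k\to\x^*$ and $\x^k$ lies eventually in the interior of $B_\varepsilon$. Moreover $\x^k\notin\Omega$ for infinitely many $k$. Otherwise $F_{\tau_k}(\x^k)=f(\x^k)$, and one can check that $\x^*$ would itself be a local minimizer of $F_{\tau_k}$ over $S$ in a suitable ball, contradicting the choice of $\tau_k$. We pass to that subsequence.

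Next we write the optimality conditions at $\x^k$. Because $\x^k$ is in the interior of the ball, the limiting-subdifferential Fermat rule \cite[Theorem 8.15, 10.1]{rockafellar1998variational} applies to $f+\tau_k(\|g^+\|_1+\|h\|_1)+\frac12\|\cdot-\x^*\|^2+\delta_S$. The penalty term is locally Lipschitz, so the sum rule applies, and the chain rule for the max/absolute-value compositions gives
\begin{equation*}
0\in\nabla f(\x^k)+\tau_k\Big(\sum_i\xi_i^k\nabla g_i(\x^k)+\sum_j\zeta_j^k\nabla h_j(\x^k)\Big)+(\x^k-\x^*)+N_S(\x^k).
\end{equation*}
Here $\xi_i^k\in[0,1]$, with $\xi_i^k=1$ if $g_i(\x^k)>0$ and $\xi_i^k=0$ if $g_i(\x^k)<0$. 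Similarly $\zeta_j^k\in[-1,1]$ with $\zeta_j^k=\mathrm{sign}(h_j(\x^k))$ when $h_j(\x^k)\ne0$.

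Put $\delta_k:=\|(\xi^k,\zeta^k)\|$ and normalize: $\lambda^k=\xi^k/\delta_k$, $\mu^k=\zeta^k/\delta_k$. Since $\x^k\notin\Omega$, some $\xi_i^k=1$ or $|\zeta_j^k|=1$, so $\delta_k\ge 1$. Dividing the inclusion by $\tau_k\delta_k\to\infty$, the terms $\nabla f(\x^k)$ and $\x^k-\x^*$ vanish in the limit, and $N_S(\x^k)$ is a cone. Passing to a subsequence, $(\lambda^k,\mu^k)\to(\lambda,\mu)\ne0$. By the closedness (outer semicontinuity) of $N_S$, which is explicitly a finite union of subspaces $S_{\overline J}$, we obtain condition (1):
\begin{equation*}
-\Big[\sum_i\lambda_i\nabla g_i(\x^*)+\sum_j\mu_j\nabla h_j(\x^*)\Big]\in N_S(\x^*).
\end{equation*}
Since $\lambda\ge0$, and $g_i(\x^*)<0$ implies $g_i(\x^k)<0$ for large $k$ and hence $\lambda_i=0$, condition (2) holds. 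For condition (3), observe that whenever $\lambda_i>0$ we have $\xi_i^k>0$ for large $k$, hence $g_i(\x^k)\ge0$. We want the stronger sign pattern $\lambda_ig_i(\x^k)>0$ and $\mu_jh_j(\x^k)>0$ on indices with nonzero multipliers. The sequence $\{\x^k\}\subseteq S$ then contradicts CC-pseudonormality.

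\textbf{Main obstacle.} The delicate step is enforcing the strict sign condition (3), because $\xi_i^k$ may be interior to $[0,1]$ when $g_i(\x^k)=0$. The fix I would try first is to replace the $\ell_1$ penalty in the auxiliary problem by the smooth quadratic penalty $\frac{\tau_k}{2}(\|g^+\|^2+\|h\|^2)$, as in Bertsekas' proof of the enhanced Fritz John conditions. Its multipliers $\tau_k g_i^+(\x^k)$ and $\tau_k h_j(\x^k)$ carry exactly the signs of the constraint values, so (3) follows automatically whenever the limit multiplier is nonzero. The remaining task is then to link that sequence back to the $\ell_1$ penalty: if the normalized multipliers $\tau_k\|(g^+(\x^k),h(\x^k))\|$ stay bounded, one derives that $\x^*$ locally minimizes $F_\tau$ for some finite $\tau$; otherwise the contradiction above applies. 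Making this dichotomy rigorous, with $N_S$ replacing the abstract-set normal cone and without Lipschitz continuity of $f$ (only local boundedness of $\nabla f$ near $\x^*$ is used), is where the real work lies.
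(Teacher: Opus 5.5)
Your proposal is incomplete: you establish conditions (1) and (2), but condition (3) of Definition~\ref{R-MF} is left open, and the quadratic-penalty ``dichotomy'' you offer instead is never made rigorous. The obstacle goes away once you notice two things. First, (3) only asks that the weighted sum $\sum_i\lambda_ig_i(\x^k)+\sum_j\mu_jh_j(\x^k)$ be positive. It does not ask that every term with a nonzero multiplier be positive; that stronger sign pattern comes from the enhanced Fritz John conditions and is not needed here. Second, the $\ell_1$ penalty pins the multipliers of violated constraints: $\xi_i^k=1$ whenever $g_i(\x^k)>0$, and $\zeta_j^k=\pm1$ whenever $\pm h_j(\x^k)>0$.

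Since $\x^k\notin\Omega$ and there are finitely many constraints, first pass to a subsequence along which one fixed index has $g_{i_0}(\x^k)>0$, or one fixed $h_{j_0}(\x^k)$ keeps a constant nonzero sign. Then $\lambda_{i_0}=\lim_k 1/\delta_k\ge 1/\sqrt{m+p}>0$, so $\lambda_{i_0}g_{i_0}(\x^k)>0$. Every other term is nonnegative for large $k$ by what you already showed. If $\lambda_i>0$, then $\xi_i^k>0$, hence $g_i(\x^k)\ge0$. If $\mu_j>0$ (resp.\ $<0$), then $\zeta_j^k>0$ (resp.\ $<0$), which rules out $h_j(\x^k)<0$ (resp.\ $>0$). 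So the sum is positive, and $\{\x^k\}\subseteq S$ contradicts CC-pseudonormality. This is exactly the observation the paper's proof rests on: it takes a subsequence on which some constraint is violated with multiplier exactly $1$ (resp.\ $\pm1$). It also skips normalization, since the multipliers in \eqref{lambdamu} are already bounded and dividing by the penalty parameter, as in \eqref{eq2}, suffices.

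There is a second, smaller gap caused by your proximal term $\frac12\|\x-\x^*\|^2$. If $\x^k\in\Omega$, strict minimality only gives $\x^k=\x^*$, i.e.\ $\x^*$ minimizes $F_{\tau_k}+\frac12\|\cdot-\x^*\|^2$ over $S\cap B_\varepsilon$. That does not make $\x^*$ a local minimizer of $F_{\tau_k}$. Take $n=2$, $s=1$, $h(\x)=(x_1^3,x_2)$ (so $\Omega=\{0\}$), $f(\x)=-\frac14\sin^2x_1$, and $\x^*=0$. The auxiliary objective is bounded below by $\frac14x_1^2+\tau|x_1|^3+\tau|x_2|+\frac12x_2^2$, so it is uniquely minimized at $0\in\Omega$ for every $\tau$. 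Yet $F_\tau(t,0)<F_\tau(0)$ for small $t\neq0$. CC-pseudonormality fails in this example, of course, but your justification of the step never uses it.

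Since you assume strict local minimality, you do not need the proximal term to force $\x^k\to\x^*$; drop it, as the paper does. Then, because $\x^*$ is not a local minimizer, $F_{\tau_k}(\x^k)<f(\x^*)$. So $\x^k\in\Omega$ would give a feasible point with $f(\x^k)<f(\x^*)$, and hence $\x^k\notin\Omega$ for every $k$. With these two repairs your argument coincides with the paper's. Your use of the limiting Fermat rule with $N_S(\x^k)$, together with outer semicontinuity of $N_S$, is if anything cleaner than the paper's appeal to $\widehat N_S(\x^k)$.
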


\begin{proof}
Suppose for contradiction that $\x^*$ is a strict local minimizer of problem \eqref{Pro1} but not a local minimizer of
\begin{equation}\label{SCP1}
\min~~f(\x)+k[\|g^+(\x)\|_1+\|h(\x)\|_1] \quad \mathrm{s.t.}\quad \x\in S
\end{equation}
for all $k=1,2, \cdots$. Let $\epsilon>0$ and $\x^*$ is a strict minimizer of
\begin{equation}\label{SICO1}
\min ~~f(\x)\quad \mathrm{s.t.} \quad \x\in \Omega\cap S, ~\|\x-\x^*\|\leq \epsilon.
\end{equation}
Suppose that $\x^k\in S$ is the minimizer of problem  \eqref{SCP1} in the neighborhood $N(\x^*,\epsilon)$. As $\x^*$ is not a local minimizer of problem \eqref{SCP1}, we can see that $\x^k\neq \x^*$, which derives that $\x^k\notin \Omega$.
Otherwise, if $\x^k\in \Omega\cap S$, or $\|g^+(\x^k)\|_1=\|h(\x^k)\|_1=0$, it implies that
\begin{equation*}
f(\x^k)= f(\x^k)+k[\|g^+(\x^k)\|_1+\|h(\x^k)\|_1]\leq f(\x^*)+k[\|g^+(\x^*)\|_1+\|h(\x^*)\|_1]=f(\x^*),
\end{equation*}
which contradicts with $\x^*$ being a strictly local minimizer of problem \eqref{Pro1}.
Thus $\|g^+(\x^k)\|_1+\|h(\x^k)\|_1>0$ and
\begin{equation}\label{eq1}
f(\x^k)+k[\|g^+(\x^k)\|_1+\|h(\x^k)\|_1]\leq f(\x^*).
\end{equation}
It follows that  $\|g^+(\x^k)\|_1+\|h(\x^k)\|_1\to 0$ as $k\to \infty$. Since $\{\x^k\}$ is bounded, let $\overline{\x}$ be an accumulation point. Then $\overline{\x}\in \Omega\cap S$ and \eqref{eq1} gives $f(\overline{\x})\leq f(\x^*)$. Combining this with the fact that $\x^*$ being a strict  minimizer of problem \eqref{SICO1}, it is not hard to show that $\overline{\x}=\x^*$. Without loss of generality, assume that $\x^k$ converges to $\x^*$. Then for sufficiently large $k$, it holds $\|\x^k-\x^*\|< \epsilon$. This leads to the the following optimality condition
\begin{equation}\label{eq2}
-\left[\frac{1}{k} \nabla f(\x^k)+\sum_{i=1}^m \lambda_i^k\nabla g_i(\x^k)
+\sum_{j=1}^p \mu_i^k\nabla h_j(\x^k) \right]\in \widehat{N}_S(\x^k),
\end{equation}
where
\begin{equation}\label{lambdamu}
			\lambda_i^k \left\{
			\begin{array}{lc} =1,&g_i(\x^k)>0,\\
				=0,& g_i(\x^k)<0,\\ \in [0,1],&g_i(\x^k)=0, \end{array}
			\right.\qquad \textrm{and} \qquad 
			\mu_j^k \left\{
			\begin{array}{lc} =1,&h_j(\x^k)>0,\\
				=-1,& h_j(\x^k)<0,\\ \in [-1,1],&h_j(\x^k)=0. \end{array}
			\right.
\end{equation}
Since $\|g^+(\x^k)\|_1+\|h(\x^k)\|_1>0$, there exists a subsequence $\{\lambda^k,\mu^k\}_{k\in \mathcal{K}}$ such that for all $k\in \mathcal{K}$, $\lambda_i^k=1$ and $g_i(\x^k)>0$ for some index $i\in \{1,\cdots,m\}$ and $|\mu_j^k|=1$ and $h_j(\x^k)\neq 0$ for some index $j\in \{1,\cdots,p\}$. Thus
\begin{equation*}
\sum_{i=1}^m\lambda_ig_i(\x^k)+\sum_{j=1}^p\mu_jh_j(\x^k)>0, ~\forall k\in \mathcal{K}.
\end{equation*}
Let $(\lambda^*,\mu^*)$ be a limit point of this subsequence of $\{(\lambda^k,\mu^k)\}$.
Then it holds that $(\lambda^*,\mu^*)\neq 0$ and $\lambda^*\geq 0$. 
Passing to the limit in \eqref{eq2} as $k\to \infty$ and using $ \widehat{N}_S(\x^k)\to N_S(\x^*) $, we obtain
\begin{equation*}\label{eq3}
-\sum_{i=1}^m \lambda_i^*\nabla g_i(\x^*)-\sum_{j=1}^p \mu_j^*\nabla h_j(\x^*)\in N_S(\x^*).
\end{equation*}
This contradicts the CC-pseudonormality of $\Omega$ at $\x^*$. The proof is completed.
\end{proof}

\section{New bounds for convergent methods}\label{Section4}
	
In this section, we employ the projected subgradient method (PSM) and the corresponding stochastic projected subgradient method (SPSM) to solve problem \eqref{Pro2}. Furthermore, we derive improved bounds for the convergence analysis.

\subsection{Projected subgradient method}		

The projected subgradient method was adopted in \cite{liu2019one} to solve problems with cardinality constraints, where the sequence was shown to be bounded, the objective function sequence non-increasing, and convergent to the minimal value. Here, we also apply the projected subgradient method to solve problem \eqref{Pro2}, as summarized in Algorithm \ref{PSM}. Under the same condition that the objective function of problem \eqref{Pro2} is Lipschitz continuous, we establish a tighter bound for the generated sequence than those provided in \cite{liu2019one}. Furthermore, we prove that the sequence by the PSM with Polyak stepsizes as in \cite{allen1987generalization} for problems with convex constraints is convergent. 

\begin{algorithm}[ht!]
	\caption{Projected subgradient method (PSM) for problem \eqref{Pro2}} \label{PSM}	
	\KwIn{Given $\x^0=0$, let $k= 0$.}
	\While{not converged}
	{
	(\textit{Step 1}) Pick $\x^{k+1}=\proj_S(\x^k-\alpha_k \d^k)$, where  $\alpha_k$ is the stepsize  and  $\d^k\in \partial F_{\tau}(\x^k)$, that is 
            \begin{equation*}
            \d^k=\nabla f(\x^k)+ {\tau}\sum_{i=1}^{m}\lambda_i^k\nabla g_i(\x^k)+
				{\tau}\sum_{j=1}^{p}\mu_j^k\nabla h_j(\x^k)
                \end{equation*}
				with $\lambda_i^k$ and $\mu_i^k$ are defined as in \eqref{lambdamu}.
                
   (\textit{Step 2}) Check convergence: if 
\begin{equation*}
\|\x^{k+1}-\x^k\|\leq \epsilon,
 \end{equation*}  
 then stop. Otherwise, let $k \leftarrow  k+1$ and  go to \textit{Step 1}.		
	}	
	\KwOut{$\mathbf{x}$.}	
\end{algorithm}

Unlike the ordinary gradient method for differentiable objective functions, the subgradient method is not a descent method. Furthermore, the stepsizes in the subgradient method are fixed in advance \cite{boyd2003subgradient}, rather than being selected via a line search as in the ordinary gradient method. The following types of stepsize rules are commonly used
\begin{equation}\label{stepsize1}\alpha_k=\alpha;\end{equation}
\begin{equation}\label{stepsize2}\alpha_k=s/\|\d^k\|;\end{equation}
\begin{equation}\label{stepsize3}
\alpha_k\geq 0,~\sum_{i=1}^{\infty}\alpha_k=\infty,~\sum_{i=1}^{\infty}\alpha_k^2<\infty; 
\end{equation}
\begin{equation}\label{stepsize4}\alpha_k=s_k(F_{\tau}^k-\rho)/\|\d^k\|^2, \end{equation}
where  $\x^*$ is an optimal solution of problem \eqref{Pro2} and $F_{\tau}^*=F_{\tau}(\x^*)$  satisfying 
\begin{equation}\label{stepsize41}
\rho\geq F_{\tau}^*+L_{\tau}\|\x^*\|, ~F_{\tau}^k>\rho, ~0<s_k\leq \beta<2, ~ \sum_{i=1}^{\infty}s_i=\infty 
\end{equation}
and $\beta$ is  a fixed constant \cite{allen1987generalization}.	

Throughout this section, we make the following assumption for problem \eqref{Pro1}. 
\begin{ass}\label{ass3}
Functions $f$, $g_i~(i=1,\cdots,m)$, and $h_j~(j=1,\cdots,p)$ are Lipschitz continuous and convex on $\R^n$. 
\end{ass}
If Assumption \ref{ass3} holds, then there exists a constant $L_{\tau}>0$ such that 
\begin{equation*}
\| F_{\tau}(\y)-F_{\tau}(\x)\|\leq L_{\tau}\|\y-\x\|, \ \forall \ \x,\y\in \R^n,
\end{equation*}		
which is equivalent to $\|\d\|\leq L_{\tau}$  for each $\d\in \partial F_{\tau}(\x)$ and $\x\in \R^n$. It is easy to see that if functions $f$, $g_i~(i=1,\cdots,m)$, and $h_j~(j=1,\cdots,p)$  are convex, then the function $F_{\tau}$ is also convex. 
	
In order to prove the convergence of PSM, we need the following lemma. This lemma makes us give  tighter bounds of the generating sequence than the bounds in \cite{liu2019one} and the objective values are smaller at the same time. 
\begin{lemma}
Let sequence $\{\x^k\}$ be generated by PSM and denote $\z^k:=\x^k-\alpha_k \d^k$. Then for any $k\geq 1$, we have 
		\begin{equation}\label{eqle3-1}
			\begin{aligned}
				\|\x^{k+1}-\x^*\|^2&=\|\x^k-\x^*\|^2+\alpha_k^2\|\d^k\|^2-\|\x^{k+1}-\z^k\|^2\\
				&~~~~-2\langle\x^{k+1}-\z^k,\x^* \rangle +2\alpha_k \langle \d^k,\x^*-\x^k \rangle.
			\end{aligned}
		\end{equation}
\end{lemma}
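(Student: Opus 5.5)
The identity is purely algebraic. The only structural input is a property of the hard-thresholding projection $\proj_S$: the residual $\x^{k+1}-\z^k$ is orthogonal to $\x^{k+1}$. I would first isolate this fact, then expand the square in two stages.

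\emph{Step 1 (orthogonality).} Since $\x^{k+1}=\proj_S(\z^k)$ keeps the $s$ largest-magnitude entries of $\z^k$ and zeroes the rest, choose an index set $J$ with $|J|\le s$ such that $\x^{k+1}=\z^k_J$. Then $(\x^{k+1}-\z^k)_i=0$ for $i\in J$, while $x^{k+1}_i=0$ for $i\in\overline{J}$. Hence the supports of $\x^{k+1}-\z^k$ and $\x^{k+1}$ are disjoint, so
\begin{equation*}
\langle \x^{k+1}-\z^k,\x^{k+1}\rangle=0 .
\end{equation*}
This holds whichever element of the projection is selected when there are ties. It is the only place where the cardinality structure enters, and I expect it to be the one nontrivial point. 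For a general nonconvex projection this orthogonality fails; for $S$ it holds because $S$ is a union of coordinate subspaces and the projection is exact onto one of them.

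\emph{Step 2 (first expansion).} Write $\x^{k+1}-\x^*=(\x^{k+1}-\z^k)+(\z^k-\x^*)$ and expand:
\begin{equation*}
\|\x^{k+1}-\x^*\|^2=\|\x^{k+1}-\z^k\|^2+2\langle \x^{k+1}-\z^k,\z^k-\x^*\rangle+\|\z^k-\x^*\|^2 .
\end{equation*}
Substitute $\z^k=\x^{k+1}-(\x^{k+1}-\z^k)$ in the middle inner product. This gives
\begin{equation*}
2\langle \x^{k+1}-\z^k,\z^k-\x^*\rangle = 2\langle \x^{k+1}-\z^k,\x^{k+1}\rangle-2\|\x^{k+1}-\z^k\|^2-2\langle \x^{k+1}-\z^k,\x^*\rangle .
\end{equation*}
By Step 1 the first term on the right vanishes. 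Combining, the right-hand side of the expansion becomes
\begin{equation*}
\|\z^k-\x^*\|^2-\|\x^{k+1}-\z^k\|^2-2\langle \x^{k+1}-\z^k,\x^*\rangle .
\end{equation*}

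\emph{Step 3 (second expansion).} Expand $\|\z^k-\x^*\|^2=\|(\x^k-\x^*)-\alpha_k\d^k\|^2$ to get
\begin{equation*}
\|\z^k-\x^*\|^2=\|\x^k-\x^*\|^2+\alpha_k^2\|\d^k\|^2+2\alpha_k\langle \d^k,\x^*-\x^k\rangle .
\end{equation*}
Inserting this into the result of Step 2 yields exactly \eqref{eqle3-1}. No convexity or Lipschitz assumption is needed for this identity; those assumptions enter only when it is later used to bound the terms.
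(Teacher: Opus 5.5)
Your proposal is correct and is essentially the paper's argument. Both proofs rest on the hard-thresholding fact $\langle \x^{k+1}-\z^k,\x^{k+1}\rangle=0$, which the paper writes in the equivalent form $\langle \x^{k+1}-\z^k,\z^k\rangle=-\|\x^{k+1}-\z^k\|^2$, followed by direct expansion of the square; the only difference is that you expand around $\z^k$ where the paper expands around $\x^k$, which merely reorders the same computation.
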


\begin{proof}
		From  $\x^{k+1}=\proj_S(\z^k)$ and the property of projection onto sparse set, 
		if we denote $\Gamma^{k+1}=\Gamma(\x^{k+1})$, then 
		$\x^{k+1}=((\x^{k+1})_{\Gamma^{k+1}},{\bf{0}})=((\z^{k})_{\Gamma^{k+1}},\bf{0})$, which 
		yields that 
		$\|\x^{k+1}-\z^k\|^2=\|(\z^k)_{\overline{\Gamma}^{k+1}}^{k+1}\|^2$. 
		Then it holds that 
		\begin{equation}\label{projS1}
			\langle \x^{k+1}-\z^k,\z^k\rangle=-\|\x^{k+1}-\z^k\|^2.
		\end{equation}
		We can deduce that  
		\begin{equation*}
			\begin{aligned}
				&\|\x^{k+1}-\x^*\|^2\\
				&=\|\x^{k+1}-\x^k+\x^k-\x^*\|^2\\
				&=\|\x^k-\x^*\|^2+\|\x^{k+1}-\x^k\|^2+2\langle\x^{k+1}-\x^k, \x^k-\x^*\rangle\\
				&=\|\x^k-\x^*\|^2+\|\x^{k+1}-\z^k+\z^k-\x^k\|^2+2\langle\x^{k+1}-\z^k+\z^k-\x^k, \x^k-\x^*\rangle\\
				&=\|\x^k-\x^*\|^2+\|\x^{k+1}-\z^k\|^2+\alpha_k^2\|\d^k\|^2+2\langle \x^{k+1}-\z^k,\z^k-\x^k\rangle\\
				&~~~~+2\langle\x^{k+1}-\z^k, \x^k-\x^*\rangle+2\langle\z^k-\x^k, \x^k-\x^*\rangle\\
				&=\|\x^k-\x^*\|^2+\|\x^{k+1}-\z^k\|^2+\alpha_k^2\|\d^k\|^2+2\langle \x^{k+1}-\z^k,\z^k\rangle
				-2\langle \x^{k+1}-\z^k,\x^k\rangle\\
				&~~~~+2\langle\x^{k+1}-\z^k, \x^k\rangle-2\langle\x^{k+1}-\z^k, \x^*\rangle-2\langle\alpha_k \d^k, \x^k-\x^*\rangle\\
				&\makecell[c]{\overset{\eqref{projS1}}{=}}
				\|\x^k-\x^*\|^2+\|\x^{k+1}-\z^k\|^2+\alpha_k^2\|\d^k\|^2-2\|\x^{k+1}-\z^k\|^2\\
				&~~~~-2\langle\x^{k+1}-\z^k, \x^*\rangle+2\alpha_k\langle \d^k, \x^*-\x^k\rangle\\
				&=\|\x^k-\x^*\|^2+\alpha_k^2\|\d^k\|^2-\|\x^{k+1}-\z^k\|^2-2\langle\x^{k+1}-\z^k, \x^*\rangle+2\alpha_k\langle \d^k, \x^*-\x^k\rangle.
			\end{aligned}
		\end{equation*}
		The proof is finished.
\end{proof}
The next theorem gives the convergence of PSM. Compared with the convergence in \cite{liu2019one}, we give a smaller upper bound of the difference between the generating sequence $\x^k$ and the optimal point $\x^*$ of problem \eqref{Pro2}. Specifically, it was show in \cite{liu2019one} that  
\begin{equation*}
\|\x^{k+1}-\x^*\|^2\leq\epsilon^2+6L_{\tau}^2\sum_{i=1}^{k}\alpha_i^2+2L_{\tau}\|\x^*\|\sum_{i=1}^{k}\alpha_i,
\end{equation*}
where $\epsilon$ is a given constant. And 
\begin{equation*}
\underline{F}^{k}_{\tau}-F_{\tau}^*\leq \frac{\epsilon^2}{2\sum_{i=1}^{k}\alpha_k} +\frac{3L_{\tau}^2\sum_{i=1}^{k}\alpha_i^2}{\sum_{i=1}^{k}\alpha_k}+L_{\tau}\|\x^*\|,
\end{equation*}
where 
\begin{equation}\label{eq4-50}
\underline{F}^{k}_{\tau}:=\min\{F_{\tau}(\x^1),\cdots,F_{\tau}(\x^k)\}. 
\end{equation}
Under the same conditions, we have the following results. 
\begin{theorem}\label{th3-1}
Suppose that Assumption \ref{ass3} holds and  $\x^*$ is an optimal solution of problem \eqref{Pro2}. If  the initial point $\x^1$ satisfies $\|\x^1-\x^*\|\leq \epsilon$ for some $\epsilon>0$, then the sequence $\{\x^k\}$ by PSM satisfies
\begin{equation*}
\|\x^{k+1}-\x^*\|^2\leq\epsilon^2+L_{\tau}^2\sum_{i=1}^{k}\alpha_i^2+2L_{\tau}\|\x^*\|\sum_{i=1}^{k}\alpha_i. 
\end{equation*}
Moreover, it holds that 
\begin{equation}\label{eq3-42}
\underline{F}^{k}_{\tau}-F_{\tau}^*\leq \frac{\epsilon^2}{2\sum_{i=1}^{k}\alpha_k}
+\frac{L_{\tau}^2\sum_{i=1}^{k}\alpha_i^2}{2\sum_{i=1}^{k}\alpha_k}+L_{\tau}\|\x^*\|,
\end{equation}
where $\underline{F}^{k}_{\tau}$ is defined in \eqref{eq4-50} and $F_{\tau}^*:=F_{\tau}(\x^*)$.
\end{theorem}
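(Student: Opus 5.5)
The plan is to start from the identity \eqref{eqle3-1} in the preceding lemma and bound each term on its right-hand side. We need one inequality for the bound on $\|\x^{k+1}-\x^*\|^2$ and one for the subgradient term.

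First, the term $-2\langle \x^{k+1}-\z^k,\x^*\rangle$. Since $\x^{k+1}=\proj_S(\z^k)$ and $\x^{k+1}$ agrees with $\z^k$ on $\Gamma^{k+1}$, the vector $\x^{k+1}-\z^k$ equals $-(\z^k)_{\overline{\Gamma}^{k+1}}$. Rather than controlling this through $\|\x^*\|$ times a possibly large residual, I will group it with $-\|\x^{k+1}-\z^k\|^2$. The inequality I aim for is
\begin{equation*}
-\|\x^{k+1}-\z^k\|^2-2\langle \x^{k+1}-\z^k,\x^*\rangle\leq 2\alpha_k L_{\tau}\|\x^*\|,
\end{equation*}
or at least a bound of that order. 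This is the main obstacle, and it is exactly where the constant improves over \cite{liu2019one}.

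To get it, I first use that the discarded entries of $\z^k$ are the smallest in magnitude. I also use $\|\x^k\|_0\leq s$. Together these should give $\|\x^{k+1}-\z^k\|\leq\|\x^k-\z^k\|=\alpha_k\|\d^k\|\leq\alpha_k L_\tau$, because $\x^k\in S$ and the projection is a nearest point. Cauchy--Schwarz then gives $|2\langle \x^{k+1}-\z^k,\x^*\rangle|\leq 2\alpha_kL_\tau\|\x^*\|$, and I simply drop the nonpositive term $-\|\x^{k+1}-\z^k\|^2$.

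If this crude route instead produces a bound of the type $\alpha_k^2L_\tau^2$, I will use the $-\|\x^{k+1}-\z^k\|^2$ term to absorb it via completing the square. The target is that the total $\alpha_k^2$ coefficient is exactly $L_\tau^2$, coming only from $\alpha_k^2\|\d^k\|^2$.

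Second, convexity of $F_\tau$ under Assumption \ref{ass3} gives $\langle \d^k,\x^*-\x^k\rangle\leq F_\tau^*-F_\tau(\x^k)\leq 0$. Here I use the optimality of $\x^*$ for \eqref{Pro2} and $\x^k\in S$ for $k\geq1$.

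Substituting both bounds into \eqref{eqle3-1} yields
\begin{equation*}
\|\x^{k+1}-\x^*\|^2\leq\|\x^k-\x^*\|^2+\alpha_k^2L_\tau^2+2\alpha_kL_\tau\|\x^*\|-2\alpha_k\big(F_\tau(\x^k)-F_\tau^*\big).
\end{equation*}
For the first claim, I drop the last term, telescope from $1$ to $k$, and use $\|\x^1-\x^*\|\leq\epsilon$.

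For \eqref{eq3-42}, I keep the last term while telescoping and use $\|\x^{k+1}-\x^*\|^2\geq0$. This gives
\begin{equation*}
2\sum_{i=1}^k\alpha_i\big(F_\tau(\x^i)-F_\tau^*\big)\leq\epsilon^2+L_\tau^2\sum_{i=1}^k\alpha_i^2+2L_\tau\|\x^*\|\sum_{i=1}^k\alpha_i.
\end{equation*}
I then bound each $F_\tau(\x^i)$ below by $\underline{F}^k_\tau$ and divide by $2\sum_{i=1}^k\alpha_i$.
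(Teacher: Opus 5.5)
Your proposal is correct and follows the paper's proof essentially step for step. You start from \eqref{eqle3-1} and drop $-\|\x^{k+1}-\z^k\|^2$. You bound $-2\langle \x^{k+1}-\z^k,\x^*\rangle$ by Cauchy--Schwarz together with $\|\x^{k+1}-\z^k\|\leq\|\x^k-\z^k\|=\alpha_k\|\d^k\|\leq\alpha_kL_\tau$, which is the nearest-point property of $\proj_S$ with $\x^k\in S$. You then apply the subgradient inequality and telescope; your fallback ``completing the square'' branch is never needed.

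As in the paper, the step $\langle\d^k,\x^*-\x^k\rangle\leq F_\tau^*-F_\tau(\x^k)$ rests on convexity of $F_\tau$. For the equality terms this is only guaranteed when the $h_j$ are affine, since $|h_j|$ need not be convex for a merely convex $h_j$.
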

    
\begin{proof}
It follows from the convexity of $F_{\tau}$ and $\d^k\in \partial F_{\tau}(\x^k)$ that 
\begin{equation}\label{subdiff}
F_{\tau}(\x^*)\geq F_{\tau}(\x^k)+ \langle \d^k,\x^*-\x^k \rangle. 
\end{equation}
Together with \eqref{eqle3-1} and \eqref{subdiff}, we have 
\begin{align}
				&\|\x^{k+1}-\x^*\|^2 \nonumber \\
				&\makecell[c]{\overset{\eqref{eqle3-1}}{\leq}}\|\x^k-\x^*\|^2+\alpha_k^2\|\d^k\|^2+2\|\x^{k+1}-\z^k\|\|\x^*\| +2\alpha_k \langle \d^k,\x^*-\x^k \rangle \nonumber \\
				&\leq\|\x^k-\x^*\|^2+\alpha_k^2\|\d^k\|^2+2\|\x^k-\z^k\|\|\x^*\| +2\alpha_k\langle \d^k,\x^*-\x^k \rangle \nonumber \\	
				&=\|\x^k-\x^*\|^2+\alpha_k^2\|\d^k\|^2+2\alpha_k\|\d^k\|\|\x^*\|+2\alpha_k\langle \d^k,\x^*-\x^k \rangle \label{le3-45} \\
				&\leq\|\x^k-\x^*\|^2+\alpha_k^2L_{\tau}^2+2\alpha_kL_{\tau}\|\x^*\|+2\alpha_k\langle \d^k,\x^*-\x^k \rangle \label{le3-21} \\
				&\makecell[c]{\overset{\eqref{subdiff}}{\leq}}\|\x^k-\x^*\|^2+\alpha_k^2L_{\tau}^2+2\alpha_kL_{\tau}\|\x^*\|+2\alpha_k(F_{\tau}^*-F_{\tau}^k),  \label{le3-42}
			\end{align}
where the second inequality comes from $\x^{k+1}=\proj_S(\z^k)$. 
This yields that 
\begin{equation}\label{eq3-30}
\|\x^{k+1}-\x^*\|^2\leq\|\x^k-\x^*\|^2+\alpha_k^2L_{\tau}^2+2\alpha_kL_{\tau}\|\x^*\|.  	
\end{equation}
By induction on $k$, we can see the first assersion holds. 
		
Furthermore, it follows from \eqref{le3-42} that 
\begin{equation*}
2\alpha_k(F_{\tau}^k-F_{\tau}^*)\leq \|\x^k-\x^*\|^2-\|\x^{k+1}-\x^*\|^2+\alpha_k^2L_{\tau}^2+2\alpha_kL_{\tau}\|\x^*\|.
\end{equation*}
Summing the above inequality from $1$ to $k$, we have 
\begin{equation*}
			\begin{aligned}
				&2\sum_{i=1}^{k}\alpha_i(F_{\tau}^i-F_{\tau}^*)\\
				&\leq\|\x^1-\x^*\|^2-\|\x^{k+1}-\x^*\|^2+L_{\tau}^2\sum_{i=1}^{k}\alpha_i^2+2L_{\tau}\|\x^*\|\sum_{i=1}^{k}\alpha_i\\
				&\leq\|\x^1-\x^*\|^2+L_{\tau}^2\sum_{i=1}^{k}\alpha_i^2+2L_{\tau}\|\x^*\|\sum_{i=1}^{k}\alpha_i. 
			\end{aligned}
\end{equation*}
Together with 
\begin{equation*}
2\sum_{i=1}^{k}\alpha_k(F_{\tau}^i-F_{\tau}^*)\geq 2(\underline{F}^{k}_{\tau}-F_{\tau}^*)\sum_{i=1}^{k}\alpha_k,
\end{equation*}
we have 
\begin{equation*}
\underline{F}^{k}_{\tau}-F_{\tau}^*\leq \frac{\|\x^1-\x^*\|^2}{2\sum_{i=1}^{k}\alpha_k}
+\frac{L_{\tau}^2\sum_{i=1}^{k}\alpha_i^2}{2\sum_{i=1}^{k}\alpha_k}+L_{\tau}\|\x^*\|  
\end{equation*}
and \eqref{eq3-42} is proved. 
\end{proof}

For choices of the stepsizes \eqref{stepsize1}, \eqref{stepsize2} and \eqref{stepsize3}, we have the following convergence results.	
\begin{corollary}
Under the assumptions of Theorem \ref{th3-1},  
\begin{itemize}
			\item[(1)] if stepsize $\alpha_k$ satisfies \eqref{stepsize1}, then 
			\begin{equation*}
				\underline{F}^{k}_{\tau}-F_{\tau}^*\leq \frac{\epsilon^2}{2k\alpha}
				+\frac{\alpha L_{\tau}^2}{2}+L_{\tau}\|\x^*\|;
			\end{equation*}
			\item[(2)] if stepsize $\alpha_k$ satisfies \eqref{stepsize2}, then 
			\begin{equation*}
				\underline{F}^{k}_{\tau}-F_{\tau}^*\leq\frac{\epsilon^2L_{\tau}}{2ks}
				+\frac{sL_{\tau}}{2}+L_{\tau}\|\x^*\|;
			\end{equation*}
			\item[(3)] if stepsize $\alpha_k$ satisfies \eqref{stepsize3}, then 
			\begin{equation*}
				\varlimsup_{k\to \infty} \underline{F}^{k}_{\tau}-F_{\tau}^*\leq L_{\tau}\|\x^*\|. 
\end{equation*}
\end{itemize}
\end{corollary}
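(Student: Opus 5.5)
The corollary follows by substituting each stepsize rule into \eqref{eq3-42} from Theorem \ref{th3-1}. That bound has the form
\begin{equation*}
\underline{F}^{k}_{\tau}-F_{\tau}^*\leq \frac{\epsilon^2+L_{\tau}^2\sum_{i=1}^{k}\alpha_i^2}{2\sum_{i=1}^{k}\alpha_i}+L_{\tau}\|\x^*\|,
\end{equation*}
so in each case it suffices to evaluate or estimate the two sums $\sum_{i=1}^k\alpha_i$ and $\sum_{i=1}^k\alpha_i^2$. The theorem's hypotheses are unchanged, so the bound is available verbatim.

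For (1), substitute $\alpha_i=\alpha$. Then $\sum\alpha_i=k\alpha$ and $\sum\alpha_i^2=k\alpha^2$, and the bound becomes $\epsilon^2/(2k\alpha)+\alpha L_{\tau}^2/2+L_{\tau}\|\x^*\|$ directly.

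For (2), set $\alpha_i=s/\|\d^i\|$. Since $\|\d^i\|\leq L_{\tau}$, each $\alpha_i\geq s/L_{\tau}$, which gives $\sum\alpha_i\geq ks/L_{\tau}$. This handles the first term: $\epsilon^2/(2\sum\alpha_i)\leq \epsilon^2L_{\tau}/(2ks)$. The middle term cannot be treated the same way, because $\alpha_i^2$ has no upper bound when $\|\d^i\|$ is small. The main point of care is to return to the per-step inequality \eqref{le3-21}, before $\|\d^k\|$ was replaced by $L_{\tau}$. There the terms $\alpha_k^2\|\d^k\|^2=s^2$ and $2\alpha_k\|\d^k\|\|\x^*\|=2s\|\x^*\|$ appear exactly. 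Summing the telescoped inequality gives
\begin{equation*}
2\sum_{i=1}^k\alpha_i(F_{\tau}^i-F_{\tau}^*)\leq \epsilon^2+ks^2+2ks\|\x^*\|.
\end{equation*}
Dividing by $2\sum\alpha_i\geq 2ks/L_{\tau}$ yields
\begin{equation*}
\underline{F}^{k}_{\tau}-F_{\tau}^*\leq \frac{\epsilon^2L_{\tau}}{2ks}+\frac{sL_{\tau}}{2}+L_{\tau}\|\x^*\|,
\end{equation*}
which is the claim. This uses $\underline{F}^k_\tau-F_\tau^*\geq 0$ only in the harmless direction. That holds because the iterates lie in $S$ and $\x^*$ is optimal for \eqref{Pro2}, so $F_\tau^i\geq F_\tau^*$.

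For (3), the rule \eqref{stepsize3} gives $\sum\alpha_i\to\infty$ while $\sum\alpha_i^2$ stays bounded. Hence the first two terms of \eqref{eq3-42} tend to $0$, and taking $\limsup$ leaves $L_{\tau}\|\x^*\|$.
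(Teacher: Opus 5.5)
Your proposal is correct. The paper states this corollary without proof, presumably as a direct substitution into \eqref{eq3-42}; your parts (1) and (3) are exactly that substitution.

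Part (2) is where you genuinely depart from that route, and the departure is needed. With $\alpha_i=s/\|\d^i\|$, Cauchy--Schwarz gives $\sum_{i=1}^k\alpha_i^2\geq(\sum_{i=1}^k\alpha_i)^2/k$. Hence the middle term of \eqref{eq3-42} satisfies $L_{\tau}^2\sum\alpha_i^2/(2\sum\alpha_i)\geq L_{\tau}^2\sum\alpha_i/(2k)\geq sL_{\tau}/2$, with equality only when $\|\d^i\|=L_{\tau}$ for every $i$. So plain substitution cannot bound that term by $sL_{\tau}/2$. Returning to the per-step inequality before $\|\d^k\|$ is replaced by $L_{\tau}$, where $\alpha_k\|\d^k\|=s$ holds exactly, is what yields the stated constant. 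Your argument is therefore more careful than the substitution the paper implicitly relies on.

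Three small points:
\begin{itemize}
\item The inequality you want is \eqref{le3-45}, not \eqref{le3-21}; the latter already has $L_{\tau}$ in place of $\|\d^k\|$.
\item In the final division you only need the right-hand side $\epsilon^2+ks^2+2ks\|\x^*\|$ to be nonnegative. Your remark about $\underline{F}^{k}_{\tau}\geq F_{\tau}^*$ is true but not needed.
\item As in the paper, rule \eqref{stepsize2} tacitly assumes $\d^k\neq 0$.
\end{itemize}
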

	
\begin{theorem}
Under the assumptions of Theorem \ref{th3-1},  if stepsize $\alpha_k$ satisfies \eqref{stepsize4}, we have 
\begin{enumerate}
			\item[(1)] The sequence $\{ \|\x^k-\x^*\|^2\}$ converges;
			\item[(2)] For any given $\delta>0$, there exists $k_0$ such that $F_{\tau}^{k_0}\leq \rho+\delta$; 
			\item[(3)] The sequence $\{\x^k\}$ converges to some $\tilde{\x}\in S$ and sequence $\{F_{\tau}^{k}\}$ converges to $\rho\geq F_{\tau}(\tilde{\x})+L_{\tau}\|\x^*\|$. 
\end{enumerate}
\end{theorem}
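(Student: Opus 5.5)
The plan is to run the classical Polyak-stepsize analysis on the one-step inequality from the proof of Theorem \ref{th3-1}, and to put the extra term $2\alpha_kL_{\tau}\|\x^*\|$, which comes from the nonconvex projection, inside the target level $\rho$. Start from \eqref{le3-45} and bound $\|\d^k\|\le L_{\tau}$ only in the cross term $2\alpha_k\|\d^k\|\|\x^*\|$. Then \eqref{subdiff} gives
\begin{equation*}
\|\x^{k+1}-\x^*\|^2\le \|\x^k-\x^*\|^2+\alpha_k^2\|\d^k\|^2-2\alpha_k\bigl(F_{\tau}^k-F_{\tau}^*-L_{\tau}\|\x^*\|\bigr)
\le \|\x^k-\x^*\|^2+\alpha_k^2\|\d^k\|^2-2\alpha_k\bigl(F_{\tau}^k-\rho\bigr),
\end{equation*}
where the last step uses $\rho\ge F_{\tau}^*+L_{\tau}\|\x^*\|$ from \eqref{stepsize41} and $\alpha_k\ge 0$. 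Substituting \eqref{stepsize4} makes the last two terms equal to $-s_k(2-s_k)(F_{\tau}^k-\rho)^2/\|\d^k\|^2$. Since $s_k\le\beta<2$ and $\|\d^k\|\le L_{\tau}$, we get
\begin{equation*}
\|\x^{k+1}-\x^*\|^2\le \|\x^k-\x^*\|^2-\frac{(2-\beta)s_k}{L_{\tau}^2}\bigl(F_{\tau}^k-\rho\bigr)^2 .
\end{equation*}

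Part (1) follows at once: $\{\|\x^k-\x^*\|^2\}$ is nonincreasing and nonnegative, so it converges. Telescoping also gives $\sum_k s_k(F_{\tau}^k-\rho)^2\le L_{\tau}^2\|\x^1-\x^*\|^2/(2-\beta)<\infty$. For part (2), suppose $F_{\tau}^k>\rho+\delta$ for all $k$. Then $\sum_k s_k\delta^2<\infty$, which contradicts $\sum_k s_k=\infty$. Hence $\liminf_k F_{\tau}^k=\rho$, which is actually stronger than (2).

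For part (3), the sequence $\{\x^k\}$ is bounded by (1). Choose a subsequence with $F_{\tau}^{k_j}\to\rho$ and $\x^{k_j}\to\tilde{\x}$. Then $\tilde{\x}\in S$ because $S$ is closed, and $F_{\tau}(\tilde{\x})=\rho$ by continuity of $F_{\tau}$. To upgrade this to convergence of the whole sequence, I would repeat the first paragraph with an arbitrary reference point $\y$ in place of $\x^*$. Nothing in \eqref{eqle3-1} or \eqref{subdiff} uses optimality of $\x^*$, so the same inequality holds for every $\y$ with $F_{\tau}(\y)+L_{\tau}\|\y\|\le\rho$. The sequence is therefore Fejér monotone with respect to this set $Y$. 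If $\tilde{\x}\in Y$, then $\|\x^k-\tilde{\x}\|$ converges, and since it tends to $0$ along the subsequence, the whole sequence converges to $\tilde{\x}$. The convergence $F_{\tau}^k\to\rho$ then follows from continuity.

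The main obstacle is exactly the membership $\tilde{\x}\in Y$. Knowing $F_{\tau}(\tilde{\x})=\rho$ only controls the $L_{\tau}\|\tilde{\x}\|$ part of the reference inequality when that term is absorbed differently, and this is presumably why the statement is phrased with $L_{\tau}\|\x^*\|$ rather than $L_{\tau}\|\tilde{\x}\|$. My first attempt would be to redo the one-step bound with reference point $\tilde{\x}$ but estimate the projection term $-2\langle \x^{k+1}-\z^k,\tilde{\x}\rangle$ more carefully. The support of $\tilde{\x}$ is eventually contained in the supports picked by $\proj_S$ along the subsequence, so this term should vanish or be of order $\alpha_k^2$ there. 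If that fails, the fallback is to show $\sum_k\|\x^{k+1}-\x^k\|<\infty$ directly. The relevant bound is $\|\x^{k+1}-\x^k\|\le 2\alpha_k\|\d^k\|=2s_k(F_{\tau}^k-\rho)/\|\d^k\|$, combined with the summability $\sum_k s_k(F_{\tau}^k-\rho)^2<\infty$ and a lower bound on $\|\d^k\|$ away from the solution set.
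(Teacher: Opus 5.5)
Parts (1) and (2) are correct and follow the paper's argument. Your factor $(2-\beta)s_k/L_{\tau}^2$ is in fact the correct form of the paper's somewhat garbled estimate.

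The gap is in (3): you never prove that the whole sequence converges.

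\begin{itemize}
\item The route through $Y$ is closed, not just obstructed. Since $F_{\tau}(\tilde{\x})=\rho$, we have $\tilde{\x}\in Y$ if and only if $\tilde{\x}=0$.
\item Your fallback also fails. The bound $\sum_k s_k(F_{\tau}^k-\rho)^2<\infty$ does not imply $\sum_k s_k(F_{\tau}^k-\rho)<\infty$ (think of $s_k\equiv 1$, $F_{\tau}^k-\rho=1/k$). So no bound on $\sum_k\|\x^{k+1}-\x^k\|$ follows, with or without a lower bound on $\|\d^k\|$.
\item Your first idea is the right one, but as written it controls supports only along the subsequence $k_j$. A Fej\'er-type inequality is needed at every step of a tail.
\end{itemize}

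The paper's own device $\tilde{F}_{\tau}=\max\{F_{\tau},\eta\}$ is borrowed from the convex-feasible-set theory and does not handle the projection term either. So it offers no shortcut.

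The missing ingredients are a vanishing step length and a trapping induction.

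\begin{itemize}
\item Keep $\|\d^k\|^2$ in the decrement from (1) rather than replacing it by $L_{\tau}^2$. Then $(\alpha_k\|\d^k\|)^2=s_k\cdot s_k(F_{\tau}^k-\rho)^2/\|\d^k\|^2\le\frac{\beta}{2-\beta}\bigl(\|\x^k-\x^*\|^2-\|\x^{k+1}-\x^*\|^2\bigr)$. Summing gives $\alpha_k\|\d^k\|\to 0$.
\item Let $m:=\min_{i\in\Gamma(\tilde{\x})}|\tilde{x}_i|$ and pick $K$ with $\alpha_k\|\d^k\|<m/4$ for $k\ge K$.
\item Suppose $k\ge K$ and $\|\x^k-\tilde{\x}\|<m/4$. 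Then $\|\z^k-\tilde{\x}\|_\infty<m/2$. Hence the at most $s$ indices in $\Gamma(\tilde{\x})$ are kept by $\proj_S$, and $\langle\x^{k+1}-\z^k,\tilde{\x}\rangle=0$.
\item Now apply \eqref{eqle3-1} with $\tilde{\x}$ in place of $\x^*$, together with \eqref{subdiff} and $F_{\tau}(\tilde{\x})=\rho$. This gives $\|\x^{k+1}-\tilde{\x}\|^2\le\|\x^k-\tilde{\x}\|^2-s_k(2-s_k)(F_{\tau}^k-\rho)^2/\|\d^k\|^2$, so again $\|\x^{k+1}-\tilde{\x}\|<m/4$.
\item Start at some $k_j\ge K$ with $\|\x^{k_j}-\tilde{\x}\|<m/4$. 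The distance to $\tilde{\x}$ is then nonincreasing on the tail and tends to $0$ along the subsequence. Hence $\x^k\to\tilde{\x}$ and $F_{\tau}^k\to\rho$.
\item If $\tilde{\x}=0$, the inner product vanishes for every $k$ and no trapping is needed.
\end{itemize}

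Finally, you pass silently over the last inequality in (3). Since $F_{\tau}(\tilde{\x})=\rho$, the claim $\rho\ge F_{\tau}(\tilde{\x})+L_{\tau}\|\x^*\|$ forces $\x^*=0$, so it is false in general. The paper's proof itself only obtains $\rho\ge F_{\tau}(\tilde{\x})$. You should state explicitly that this part cannot be established as written, rather than read a meaning into it.
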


\begin{proof}
		(1) From \eqref{le3-45} and \eqref{subdiff}, one has 
		\begin{equation}\label{le3-2}
			\begin{aligned}
				&\|\x^{k+1}-\x^*\|^2\\
				&\leq\|\x^k-\x^*\|^2+\alpha_k^2\|\d^k\|^2+2\alpha_k(L_{\tau}\|\x^*\|+F_{\tau}^*-F_{\tau}^k)\\
				&\makecell[c]{\overset{\eqref{stepsize4}}{=}}\|\x^k-\x^*\|^2+\frac{s_k^2(F_{\tau}^k-\rho)^2}{\|\d^k\|^2}+\frac{2s_k(F_{\tau}^k-\rho)(L_{\tau}\|\x^*\|+F_{\tau}^*-F_{\tau}^k)}{\|\d^k\|^2}\\
				&=\|\x^k-\x^*\|^2+s_k(F_{\tau}^k-\rho)\frac{s_k(F_{\tau}^k-\rho)-2(F_{\tau}^k-F_{\tau}^*-L_{\tau}\|\x^*\|)}{\|\d^k\|^2}\\
				&\makecell[c]{\overset{\eqref{stepsize41}}{\leq}}\|\x^k-\x^*\|^2+s_k(F_{\tau}^k-\rho)\frac{s_k(F_{\tau}^k-\rho)-2(F_{\tau}^k-\rho)}{\|\d^k\|^2}\\
				&= \|\x^k-\x^*\|^2+s_k(s_k-2)(F_{\tau}^k-\rho)^2/\|\d^k\|^2. 
			\end{aligned}
		\end{equation}
Since $s_k<2$, then the sequence $\{ \|\x^k-\x^*\|^2\}$ is nonincreasing and has lower bound, this implies $\{ \|\x^k-\x^*\|^2\}$ converges. 
		
(2) Assume on the contrary that $F_{\tau}^{k}> \rho+\delta$ for all $k$. 
From \eqref{le3-2}, we have 
\begin{equation*}
-s_k(s_k-2)(F_{\tau}^k-\rho)^2/\|\d^k\|^2\leq \|\x^k-\x^*\|^2-\|\x^{k+1}-\x^*\|^2. 
\end{equation*}
Note that $0<s_k\leq \beta<2$, $\|\d^k\|\leq L_{\tau}$,  $F_{\tau}^{k}-\rho>\delta$, then 
\begin{equation*}
s_k\beta\delta/L_{\tau}^2\leq \|\x^k-\x^*\|^2-\|\x^{k+1}-\x^*\|^2.
\end{equation*}
Summing $k$ from 1 to $\infty$ yields that 
\begin{equation*}
\beta\delta/L_{\tau}^2\sum_{i=1}^{\infty}s_i\leq  \|\x^1-\x^*\|^2,
\end{equation*}
which is a contradiction with $\sum_{i=1}^{\infty}s_i=\infty$. 
		
(3) It follows that $\{F_{\tau}^k\}$ is bounded, which means that there is a convergent subsequence of  $\{F_{\tau}^k\}$ and  there is some $K(0)$ such that $F_{\tau}^{K(0)}\leq \rho+1$. Base on $K(i)$, we can define 
\begin{equation*}
\Delta_i:=\min\{F_{\tau}^1,\cdots,F_{\tau}^{K(i)}\}-\rho>0, \quad \delta_i:=\min\{{1}/{2^{i+1}},\Delta_i/2\}.
\end{equation*}
There exist some $K(i+1)$ such that $F_{\tau}^{K(i+1)}\leq \rho+\delta_i$ and $K(i+1)>K(i)$. 
As constructed, $\{F_{\tau}^{K(i)}\}$ converges to $\rho$. In addition, $\x^{K(i)}$ is bounded, then a subsequence of $\x^{K(i)}$, say $\x^{K'(i)}$, converges to some point $\tilde{\x}$, which means  $\{F_{\tau}^{K'(i)}\}$ also converges to $\rho$ and $\tilde{\x}\in S$ from the closedness of $S$. Let $\d\in \partial F_{\tau}(\tilde{\x})$. 
Thus for every $i$, 
\begin{equation*}
F_{\tau}^{K'(i)}\geq F_{\tau}(\tilde{\x})+\langle \d,\x^{K'(i)}-\tilde{\x} \rangle, 
\end{equation*}
which implies that $\rho\geq F_{\tau}(\tilde{\x}) $. 
Define ancillary function $\tilde{F}_{\tau}(\x):=\max\{F_{\tau}(\x),\eta\}$. Note that $\tilde{F}$ is convex and finite on $S$ and $\tilde{\x}$ is a minimizer of $\tilde{F}$ on $S$. Furthermore, $\tilde{F}_{\tau}(\x)\geq F_{\tau}(\x)$ and each $F_{\tau}^k>\rho$, each $\d^k\in \partial F_{\tau}(\x^k)$ is a subgradient of $\partial \tilde{F}$ at $\x^k$. Thus, $F_{\tau}^k=\tilde{F}_{\tau}(\x^k)$ for all $k$, and $\{\x^k\}$ converges to $\tilde{\x}$. 
Since $F_{\tau}$ is continuous on $S$, then 
\begin{equation*}
\lim_{k\to \infty}F_{\tau}^k=F_{\tau}(\tilde{\x})=\lim_{i\to \infty}F_{\tau}^{K'(i)}=\rho. 
\end{equation*}
\end{proof}
	
Furthermore, if the objective function is strongly convex, we will have  stronger convergence results. A function $f$ is called $\sigma$-strongly convex ($\sigma>0$) if it satisfies the following inequality:
\begin{equation}\label{strconvex}
f(\y)\geq f(\x)+\langle \d,\y-\x\rangle+\frac{\sigma}{2}\|\y-\x\|^2, ~\forall \x,\y\in \R^n, 
\end{equation}
where $\d\in\partial f(\x)$. 
	
\begin{theorem}
Suppose  that Assumption \ref{ass3} holds and the objective function $F_{\tau}$ is $\sigma$-strongly convex. Let $\x^*$ be an optimal solution of problem \eqref{Pro2}. If  the initial point $\x^1$ satisfies $\|\x^1-\x^*\|\leq \epsilon$ for some $\epsilon>0$, then the sequence $\x^k$  by PSM with stepsize $\alpha_k$ with  $0<\alpha_k<\frac{1}{\sigma}$ satisfies
\begin{equation}\label{eq3-40}
			\begin{aligned}
				\|\x^{k+1}-\x^*\|^2&\leq \left(\|\x^1-\x^*\|^2-\frac{2L_{\tau}}{\sigma}\|\x^*\|\right)\prod_{i=1}^k(1-\sigma\alpha_i)\\
				&~~~~+L_{\tau}^2\sum_{t=1}^k\alpha_t^2 \prod_{i=t+1}^k(1-\sigma\alpha_i)+\frac{2L_{\tau}}{\sigma}\|\x^*\|.
\end{aligned}		
\end{equation}
Furthermore, we have 
\begin{equation}\label{3-41}
			\begin{aligned}
				\underline{F}_{\tau}^{k}-F_{\tau}^*&\leq\frac{\prod_{i=1}^k(1-\sigma\alpha_i)\epsilon^2}{2\sum_{i=1}^k\alpha_i\prod_{j=i+1}^k(1-\sigma\alpha_j)}\\
                &~~~~+\frac{\sum_{i=1}^k\left(L_{\tau}^2\alpha_i^2+2\alpha_iL_{\tau}\|\x^*\|^2\right)\prod_{j=i+1}^k(1-\sigma\alpha_j)}{2\sum_{i=1}^k\alpha_i\prod_{j=i+1}^k(1-\sigma\alpha_j)}.
			\end{aligned}
		\end{equation}
\end{theorem}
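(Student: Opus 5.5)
The plan is to rerun the one-step estimate from the proof of Theorem \ref{th3-1}, replacing plain convexity by the strong convexity inequality \eqref{strconvex}. Then I unroll the resulting linear recursion.

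\textbf{Step 1 (one-step recursion).} Apply \eqref{strconvex} to $F_{\tau}$ with $\y=\x^*$, $\x=\x^k$ and $\d=\d^k$:
\begin{equation*}
2\alpha_k\langle \d^k,\x^*-\x^k\rangle\leq 2\alpha_k(F_{\tau}^*-F_{\tau}^k)-\sigma\alpha_k\|\x^k-\x^*\|^2 .
\end{equation*}
Substituting this into \eqref{le3-21}, which rests on Lemma \eqref{eqle3-1} and the projection estimate $\|\x^{k+1}-\z^k\|\leq\|\x^k-\z^k\|$, gives
\begin{equation*}
\|\x^{k+1}-\x^*\|^2\leq (1-\sigma\alpha_k)\|\x^k-\x^*\|^2+\alpha_k^2L_{\tau}^2+2\alpha_kL_{\tau}\|\x^*\|-2\alpha_k(F_{\tau}^k-F_{\tau}^*).
\end{equation*}
Since $\x^*$ is optimal over $S$ and $\x^k\in S$, we have $F_{\tau}^k\geq F_{\tau}^*$, so the last term can be dropped for the first claim.

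\textbf{Step 2 (unrolling for \eqref{eq3-40}).} Set $c:=2L_{\tau}\|\x^*\|/\sigma$ and note that $2\alpha_kL_{\tau}\|\x^*\|=\sigma\alpha_k c$. Then with $a_k:=\|\x^k-\x^*\|^2-c$ the recursion becomes
\begin{equation*}
a_{k+1}\leq(1-\sigma\alpha_k)a_k+L_{\tau}^2\alpha_k^2 .
\end{equation*}
Because $0<\alpha_k<1/\sigma$, every factor $1-\sigma\alpha_k$ is positive, so the inequality can be iterated without flipping signs. Induction gives
\begin{equation*}
a_{k+1}\leq a_1\prod_{i=1}^k(1-\sigma\alpha_i)+L_{\tau}^2\sum_{t=1}^k\alpha_t^2\prod_{i=t+1}^k(1-\sigma\alpha_i),
\end{equation*}
and adding $c$ back yields \eqref{eq3-40}. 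The only subtle point is that $a_1$ may be negative, but since the weights are positive the induction is purely linear and remains valid.

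\textbf{Step 3 (function values, \eqref{3-41}).} Keep the term $2\alpha_k(F_{\tau}^k-F_{\tau}^*)$ from Step 1 and write $r_k:=\|\x^k-\x^*\|^2$, $w_i:=\prod_{j=i+1}^k(1-\sigma\alpha_j)$. Multiply the inequality at index $i$ by $w_i$ and sum over $i=1,\dots,k$. Since $w_i(1-\sigma\alpha_i)=w_{i-1}$, the distance terms telescope to give
\begin{equation*}
r_{k+1}+2\sum_i w_i\alpha_i(F_{\tau}^i-F_{\tau}^*)\leq w_0 r_1+\sum_i w_i\bigl(L_{\tau}^2\alpha_i^2+2\alpha_iL_{\tau}\|\x^*\|\bigr).
\end{equation*}
Next drop $r_{k+1}\geq0$, bound $F_{\tau}^i-F_{\tau}^*$ below by $\underline{F}^{k}_{\tau}-F_{\tau}^*$ (nonnegative), use $r_1\leq\epsilon^2$, and divide by $2\sum_i\alpha_iw_i$. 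This gives \eqref{3-41}. In the statement the $\|\x^*\|$ term appears squared; the argument naturally produces $\|\x^*\|$, so I would either note this as a typo or check whether the intended quantity differs.

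I expect the main obstacle to be bookkeeping rather than anything conceptual. The two points to get right are the telescoping weights in Step 3, and confirming that positivity of $1-\sigma\alpha_i$, guaranteed by $\alpha_k<1/\sigma$, is all that the unrolling in Step 2 requires.
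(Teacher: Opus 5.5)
Your proposal is correct and follows the paper's proof essentially step for step: strong convexity is inserted into \eqref{le3-21}; the term $2\alpha_k(F_{\tau}^*-F_{\tau}^k)\le 0$ is dropped and the recursion is shifted by $2L_{\tau}\|\x^*\|/\sigma$ and unrolled using $1-\sigma\alpha_k>0$; for the function values the undropped recursion is unrolled with weights $\prod_{j=i+1}^k(1-\sigma\alpha_j)$ (the paper's \eqref{eq3-43}) before bounding $F_{\tau}^i$ below by $\underline{F}_{\tau}^k$. Your suspicion about $\|\x^*\|^2$ in \eqref{3-41} is also right: the paper's own \eqref{eq3-43} carries $\|\x^*\|$, and the square first appears, without justification, in the very next display, so it is a transcription error rather than something a different argument would deliver.
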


\begin{proof}
From \eqref{le3-21}, and combining $F_{\tau}$ being $\sigma$-strongly convex, we have		
		\begin{align}
			&\|\x^{k+1}-\x^*\|^2 \nonumber \\
			&\makecell[c]{\overset{\eqref{le3-21}}{\leq}}\|\x^k-\x^*\|^2+\alpha_k^2L_{\tau}^2+2\alpha_kL_{\tau}\|\x^*\|+2\alpha_k\langle \d^k,\x^*-\x^k \rangle \nonumber \\
			&\makecell[c]{\overset{\eqref{strconvex}}{\leq}}\|\x^k-\x^*\|^2+\alpha_k^2L_{\tau}^2+2\alpha_kL_{\tau}\|\x^*\|+2\alpha_k  \left( F_{\tau}^*- {F}_{\tau}^{k}-\frac{\sigma}{2}\|\x^*-\x^k\|^2\right)  \nonumber  \\ 
			&=(1-\sigma\alpha_k)\|\x^k-\x^*\|^2+\alpha_k^2L_{\tau}^2+2\alpha_kL_{\tau}\|\x^*\|
			+2\alpha_k  (F_{\tau}^*- {F}_{\tau}^{k}) \label{ineq3-32} 
			\\ 
            &\leq (1-\sigma\alpha_k)\|\x^k-\x^*\|^2+\alpha_k^2L_{\tau}^2+2\alpha_kL_{\tau}\|\x^*\|. \label{ineq3-40}
		\end{align}
Rewriting \eqref{ineq3-40}, we see that  
\begin{equation*}
\|\x^{k+1}-\x^*\|^2-\frac{2L_{\tau}}{\sigma}\|\x^*\|
		\leq (1-\sigma\alpha_k)\left(\|\x^k-\x^*\|^2-\frac{2L_{\tau}}{\sigma}\|\x^*\|\right)+\alpha_k^2L_{\tau}^2. 
\end{equation*}		
By induction on $k$, we have  
		\begin{equation*}
			\begin{aligned}
				&\|\x^{k+1}-\x^*\|^2-\frac{2L_{\tau}}{\sigma}\|\x^*\|\\
				&\leq \prod_{i=1}^k(1-\sigma\alpha_i)\left(\|\x^1-\x^*\|^2-\frac{2L_{\tau}}{\sigma}\|\x^*\|\right)
				+L_{\tau}^2\sum_{t=1}^k\alpha_t^2 \prod_{i=t+1}^k(1-\sigma\alpha_i).
			\end{aligned}
		\end{equation*}
where we define $\Pi_{i=t+1}^k(1-\sigma\alpha_i)=1$ when $t=k$, which means \eqref{eq3-40} holds. 
		
Rewriting \eqref{ineq3-32}, we have 
		\begin{equation*}
			\begin{aligned}
				\|\x^{k+1}-\x^*\|^2
				\leq (1-\sigma\alpha_k)\|\x^k-\x^*\|^2+\alpha_k^2L_{\tau}^2+2\alpha_kL_{\tau}\|\x^*\|
				+2\alpha_k  (F_{\tau}^*- {F}_{\tau}^{k}).
			\end{aligned}
		\end{equation*}
By induction on $k$, we obtain
		\begin{equation}\label{eq3-43}
			\begin{aligned}
				&\|\x^{k+1}-\x^*\|^2\\
				&\leq  \prod_{i=1}^k(1-\sigma\alpha_i)\|\x^1-\x^*\|^2
				+\sum_{i=1}^k\left(L_{\tau}^2\alpha_i^2+2\alpha_iL_{\tau}\|\x^*\|\right) \prod_{j=i+1}^k(1-\sigma\alpha_j)\\
				&~~~~-2\sum_{i=1}^k\alpha_i(F_{\tau}^i-F_{\tau}^*)\prod_{j=i+1}^k(1-\sigma\alpha_j). 
			\end{aligned}
		\end{equation}
Rewrite \eqref{eq3-43} as 
		\begin{equation*}
			\begin{aligned}
				&2\sum_{i=1}^k\alpha_i(F_{\tau}^i-F_{\tau}^*)\prod_{j=i+1}^k(1-\sigma\alpha_j)\\
				&\leq  \prod_{i=1}^k(1-\sigma\alpha_i)\|\x^1-\x^*\|^2-\|\x^{k+1}-\x^*\|^2
				+\sum_{i=1}^k\left(L_{\tau}^2\alpha_i^2+2\alpha_iL_{\tau}\|\x^*\|^2\right)\prod_{j=i+1}^k(1-\sigma\alpha_j)\\
				&\leq   \prod_{i=1}^k(1-\sigma\alpha_i)\|\x^1-\x^*\|^2
				+\sum_{i=1}^k\left(L_{\tau}^2\alpha_i^2+2\alpha_iL_{\tau}\|\x^*\|^2\right)\prod_{j=i+1}^k(1-\sigma\alpha_j).
			\end{aligned}
		\end{equation*}
Together with 
		\begin{equation*}
			\underline{F}_{\tau}^{k}-F_{\tau}^*\leq F_{\tau}^i-F_{\tau}^*, 
		\end{equation*}
		we have \eqref{3-41} holds and the proof is completed.  
\end{proof}

\subsection{Stochastic projected subgradient method}

The stochastic version of PSM is presented in Algorithm \ref{SPSM}, where $\beta_k$  can be taken as fixed stepsize $\beta_k=\beta$ or  $\beta_k=\frac{2}{\sigma(k+1)}$. Below, we establish the convergence results corresponding to these two stepsize strategies.

\begin{algorithm}[ht!]
	\caption{Stochastic	projected subgradient method (SPSM) for problem \eqref{Pro2}} \label{SPSM}	
	\KwIn{Given $\x^0=0$, let $k= 0$.}
	\While{not converged}
	{
	(\textit{Step 1}) Pick $\x^{k+1}=\proj_S(\x^k-\beta_k \tilde{\d}^k),$ where  $\beta_k$ is the stepsize  and  $\tilde{\d}^k$ is the stochastic version of the subgradient $\d^k$ at $\x^k$, that is,  $\mathbb{E}(\tilde{\d}^k)=\d^k\in \partial F_{\tau}(\x^k)$, and 
            \begin{equation*}
\d^k=\nabla f(\x^k)+ {\tau}\sum_{i=1}^{m}\lambda_i^k\nabla g_i(\x^k)+{\tau}\sum_{j=1}^{p}\mu_j^k\nabla h_j(\x^k)
                \end{equation*}
with $\lambda_i^k$ and $\mu_i^k$ are defined in \eqref{lambdamu}.
                
   (\textit{Step 2}) Check convergence: if 
\begin{equation*}
\|\x^{k+1}-\x^k\|\leq \epsilon,
 \end{equation*}  
 then stop. Otherwise, let $k \leftarrow  k+1$ and  go to \textit{Step 1}.		
	}	
	\KwOut{$\mathbf{x}$.}	
\end{algorithm}

	\begin{theorem}
		Let $\x^*$ and $\x^1$ be an optimal solution of problem \eqref{Pro2} and an initial point satisfying 
		$\|\x^1-\x^*\|\leq \epsilon$ with $\epsilon>0$, respectively. 
		Suppose that $\tilde{\d}^k$ satisfies the following conditions
		\begin{equation}\label{grad2}
			\mathbb{E}(\|\tilde{\d}^k\|)\leq M_1, \ \ \mathbb{E}(\|\tilde{\d}^k\|^2)\leq M_2. 
		\end{equation}
		Then the sequence $\x^k$ generated by SPSM with $\beta_k=\beta$ satisfies
		\begin{equation*}
			\mathbb{E}\left[F_{\tau}\left(\frac{1}{k}\sum_{i=1}^{k}\x^i\right)-F_{\tau}^*\right]\leq 
			\frac{\epsilon^2}{2\beta k} +\frac{1}{2}\beta M_2+M_1\|\x^*\|.
		\end{equation*}
	\end{theorem}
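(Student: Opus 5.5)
We mimic the proof of Theorem \ref{th3-1}, replacing $\d^k$ by $\tilde{\d}^k$ and taking expectations. Set $\z^k:=\x^k-\beta\tilde{\d}^k$, so $\x^{k+1}=\proj_S(\z^k)$. The identity \eqref{eqle3-1} only uses that $\x^{k+1}$ agrees with $\z^k$ on its support and vanishes elsewhere; it never uses that the step direction is a subgradient. Hence it holds verbatim with $\alpha_k=\beta$ and $\d^k$ replaced by $\tilde{\d}^k$:
\begin{equation*}
\|\x^{k+1}-\x^*\|^2=\|\x^k-\x^*\|^2+\beta^2\|\tilde{\d}^k\|^2-\|\x^{k+1}-\z^k\|^2-2\langle\x^{k+1}-\z^k,\x^*\rangle+2\beta\langle\tilde{\d}^k,\x^*-\x^k\rangle .
\end{equation*}

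\textbf{Step 1 (one-step inequality).} Drop $-\|\x^{k+1}-\z^k\|^2$ and apply Cauchy--Schwarz to the next term. Since $\x^k\in S$ for $k\geq 1$ and $\x^{k+1}$ is a projection onto $S$,
\begin{equation*}
\|\x^{k+1}-\z^k\|\leq\|\x^k-\z^k\|=\beta\|\tilde{\d}^k\|.
\end{equation*}
This gives
\begin{equation*}
\|\x^{k+1}-\x^*\|^2\leq\|\x^k-\x^*\|^2+\beta^2\|\tilde{\d}^k\|^2+2\beta\|\tilde{\d}^k\|\|\x^*\|+2\beta\langle\tilde{\d}^k,\x^*-\x^k\rangle.
\end{equation*}

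\textbf{Step 2 (expectations).} Take the expectation conditioned on the history $\mathcal{F}_k$, with respect to which $\x^k$ is measurable. Then $\mathbb{E}[\langle\tilde{\d}^k,\x^*-\x^k\rangle\mid\mathcal{F}_k]=\langle\d^k,\x^*-\x^k\rangle$, and by convexity of $F_\tau$ and \eqref{subdiff} this is at most $F_\tau^*-F_\tau(\x^k)$. Taking total expectations and using \eqref{grad2} for the other two terms yields
\begin{equation*}
2\beta\,\mathbb{E}[F_\tau(\x^k)-F_\tau^*]\leq\mathbb{E}\|\x^k-\x^*\|^2-\mathbb{E}\|\x^{k+1}-\x^*\|^2+\beta^2M_2+2\beta M_1\|\x^*\|.
\end{equation*}

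\textbf{Step 3 (telescoping and averaging).} Sum from $i=1$ to $k$, drop the final term $-\mathbb{E}\|\x^{k+1}-\x^*\|^2\leq0$, use $\|\x^1-\x^*\|\leq\epsilon$, and divide by $2\beta k$:
\begin{equation*}
\frac1k\sum_{i=1}^k\mathbb{E}[F_\tau(\x^i)-F_\tau^*]\leq\frac{\epsilon^2}{2\beta k}+\frac{\beta M_2}{2}+M_1\|\x^*\|.
\end{equation*}
Jensen's inequality for the convex function $F_\tau$ gives $F_\tau\bigl(\frac1k\sum_{i=1}^k\x^i\bigr)\leq\frac1k\sum_{i=1}^kF_\tau(\x^i)$. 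Taking expectations gives the claim. The average may fail to lie in $S$, but only the value $F_\tau$ at the average is bounded, so this does not matter.

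\textbf{Main obstacle.} The delicate point is the conditional-expectation step. The cross term $\langle\tilde{\d}^k,\x^*-\x^k\rangle$ must be unbiased given $\mathcal{F}_k$, so we interpret $\mathbb{E}(\tilde{\d}^k)=\d^k$ as a conditional statement given $\mathcal{F}_k$. The bounds \eqref{grad2} are needed unconditionally, and we read them that way. The term $\|\tilde{\d}^k\|\|\x^*\|$ is not a cross term and is handled directly by $M_1$. The improved constant $\frac12\beta M_2$, versus $3\beta M_2$ in \cite{liu2019one}, comes from the exact identity \eqref{eqle3-1}, which incurs no loss beyond dropping $-\|\x^{k+1}-\z^k\|^2$.
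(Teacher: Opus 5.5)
Your proposal is correct and follows essentially the same route as the paper. Both use the identity \eqref{eqle3-1} with $\tilde{\d}^k$ in place of $\d^k$, the projection bound $\|\x^{k+1}-\z^k\|\leq\beta\|\tilde{\d}^k\|$, conditional unbiasedness together with \eqref{subdiff} for the cross term, and then \eqref{grad2}, telescoping and Jensen's inequality; your handling of the conditioning (on the history, with \eqref{grad2} used unconditionally) is more explicit than the paper's, which simply conditions on $\x^k$.
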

    
	\begin{proof}
		Using the similar proof as \eqref{le3-42}, we have 
		\begin{equation*}\label{th3-8}
			\begin{aligned}
				&\mathbb{E}\left(\|\x^{k+1}-\x^*\|^2\big|\x^k\right)\\
				&\leq\|\x^k-\x^*\|^2+\beta_k^2\mathbb{E}\left(\|\tilde{\d}^k\|^2\big|\x^k\right)
				+2\beta_k\|\x^*\|\mathbb{E}\left(\|\tilde{\d}^k\|\big|\x^k\right)
				+2\beta_k\mathbb{E}\left(F_{\tau}^*-F_{\tau}^k\big|\x^k\right),
			\end{aligned}
		\end{equation*}
		where $\mathbb{E}\left(\cdot \big|\x^k\right)$ is the expectation given $\x^k$. 
		Now taking expectation with respect to $\x^k$ on both sides, and by \eqref{grad2}, it follows that 
		\begin{equation*}\label{th3-7}
			\begin{aligned}
				&\mathbb{E}\left(\|\x^{k+1}-\x^*\|^2\right)\\
				&\leq\mathbb{E}\left( \|\x^k-\x^*\|^2\right) +\beta_k^2\mathbb{E}\left(\|\tilde{\d}^k\|^2 \right)
				+2\beta_k\|\x^*\|\mathbb{E}\left(\|\tilde{\d}^k\|\right)
				+2\beta_k\mathbb{E}\left(F_{\tau}^*-F_{\tau}^k\right) \\
				&\leq\mathbb{E}\left( \|\x^k-\x^*\|^2\right) +\beta_k^2M_2
				+2\beta_k\|\x^*\|M_1
				+2\beta_k\mathbb{E}\left(F_{\tau}^*-F_{\tau}^k\right).
			\end{aligned}
		\end{equation*}
		Thus, 
		\begin{equation*}
			\begin{aligned}
				2\beta_k\mathbb{E}( F_{\tau}^k-F_{\tau}^*)\leq\mathbb{E}( \|\x^k-\x^*\|^2) -\mathbb{E}(\|\x^{k+1}-\x^*\|^2)+\beta_k^2M_2
				+2\beta_k\|\x^*\|M_1.
			\end{aligned}
		\end{equation*}		
		Summing both sides of the above inequality form $k=1$ to $k$ and by $\beta_k=\beta$, we obtain 
		\begin{equation*}
			\begin{aligned}
				2\beta\sum_{i=1}^{k}\mathbb{E}( F_{\tau}^i-F_{\tau}^*)
				&\leq\|\x^1-\x^*\|^2 +k\left(\beta^2M_2
				+2\beta\|\x^*\|M_1\right)\\
				&\leq\epsilon^2 +k\left(\beta^2M_2+2\beta\|\x^*\|M_1\right).
			\end{aligned}
		\end{equation*}
		From Jensen's inequality, we have 
		\begin{equation*}
			\begin{aligned}
				\mathbb{E}\left[F_{\tau}\left(\frac{1}{k}\sum_{i=1}^{k}\x^i\right)-F_{\tau}^*\right]
				&\leq\frac{1}{k}\sum_{i=1}^{k}\mathbb{E}\left( F_{\tau}(\x^i)-F_{\tau}^*\right)\\
				&\leq\frac{\epsilon^2}{2\beta k} +\frac{1}{2}\left(\beta M_2+2M_1\|\x^*\|\right).
			\end{aligned}
		\end{equation*}
		The proof is finished. 
\end{proof}
	
If the objective function $F_{\tau}$ is strongly convex and the stepsize $\beta_k=\frac{2}{\sigma(k+1)}$, we can obtain a stronger convergence result of SPSM. 

	\begin{theorem}
		Let $\x^*$ and $\x^1$ be an optimal solution of problem \eqref{Pro2} and an initial point satisfying 
		$\|\x^1-\x^*\|\leq \epsilon$ with $\epsilon>0$, respectively. 
		Suppose that the objective function $F_{\tau}$ is $\sigma$-strongly convex and $\tilde{\d}^k$ satisfies the conditions given in \eqref{grad2}. 
		Then the sequence $\{\x^k\}$ generated by SPSM with $\beta_k=\frac{2}{\sigma(k+1)}$ satisfies
		\begin{equation}\label{th3-11}
			\mathbb{E}\left(\|\x^{k+1}-\x^*\|^2\right)\leq\frac{4M_2}{\sigma^2(k+1)}+\frac{2M_1\|\x^*\|}{\sigma}.
		\end{equation}
		Furthermore, it holds that
		\begin{equation*}
			\mathbb{E}\left[F_{\tau}\left(\sum_{i=1}^{k}\frac{2i}{k(k+1)}\x^i\right)-F_{\tau}^*\right]\leq 
			M_1\|\x^*\|+\frac{2M_2}{\sigma(k+1)}.
		\end{equation*}
	\end{theorem}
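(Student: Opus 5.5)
We begin, as in the proof of the fixed-stepsize SPSM theorem, from the one-step estimate obtained via \eqref{eqle3-1}. The main point is that now we keep the strong-convexity term. Conditioning on $\x^k$ and using $\mathbb{E}(\tilde{\d}^k\mid\x^k)=\d^k\in\partial F_{\tau}(\x^k)$ together with \eqref{strconvex}, we get
\begin{equation*}
\mathbb{E}\left(\langle \tilde{\d}^k,\x^*-\x^k\rangle\,\big|\,\x^k\right)\leq F_{\tau}^*-F_{\tau}^k-\frac{\sigma}{2}\|\x^k-\x^*\|^2 .
\end{equation*}
We insert this into the analogue of \eqref{le3-45}, in which $\|\x^{k+1}-\z^k\|\le\|\x^k-\z^k\|=\beta_k\|\tilde{\d}^k\|$. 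Taking total expectation and using \eqref{grad2}, with $e_k:=\mathbb{E}(\|\x^k-\x^*\|^2)$, we obtain the stochastic counterpart of \eqref{ineq3-32}:
\begin{equation*}
e_{k+1}\leq (1-\sigma\beta_k)e_k+\beta_k^2M_2+2\beta_kM_1\|\x^*\|-2\beta_k\,\mathbb{E}(F_{\tau}^k-F_{\tau}^*).
\end{equation*}
With $\beta_k=\frac{2}{\sigma(k+1)}$ we have $1-\sigma\beta_k=\frac{k-1}{k+1}\ge 0$, which is what makes the recursion usable.

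For \eqref{th3-11}, we drop the last term, which is nonpositive because $\x^*$ is optimal and $\x^k\in S$. We then set $a_k:=e_k-\frac{2M_1\|\x^*\|}{\sigma}$, so that the constant term is absorbed exactly as in the proof of \eqref{eq3-40}. This gives $a_{k+1}\le\frac{k-1}{k+1}a_k+\frac{4M_2}{\sigma^2(k+1)^2}$. Multiplying by $k(k+1)$ yields the telescoping form
\begin{equation*}
k(k+1)a_{k+1}\le (k-1)k\,a_k+\frac{4kM_2}{\sigma^2(k+1)}\le (k-1)k\,a_k+\frac{4M_2}{\sigma^2}.
\end{equation*}
Summing from $1$ to $k$, the coefficient of $a_1$ is $0$, so the initial error disappears. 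We get $k(k+1)a_{k+1}\le 4kM_2/\sigma^2$, that is, $a_{k+1}\le\frac{4M_2}{\sigma^2(k+1)}$, which is \eqref{th3-11}. Note that $a_k$ may be negative; this is harmless, since we only multiply by nonnegative factors.

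For the function-value bound, we keep the $F$-term instead. Dividing the recursion by $\beta_k$ gives
\begin{equation*}
2\,\mathbb{E}(F_{\tau}^k-F_{\tau}^*)\le \frac{\sigma(k-1)}{2}e_k-\frac{\sigma(k+1)}{2}e_{k+1}+\frac{2M_2}{\sigma(k+1)}+2M_1\|\x^*\|.
\end{equation*}
Multiplying by $k$ makes the $e$-terms telescope as $\frac{\sigma}{2}\bigl[(k-1)k\,e_k-k(k+1)e_{k+1}\bigr]$. Summing over $i=1,\dots,k$ and discarding $-\frac{\sigma}{2}k(k+1)e_{k+1}\le 0$ gives
\begin{equation*}
\sum_{i=1}^k 2i\,\mathbb{E}(F_{\tau}^i-F_{\tau}^*)\le \frac{2kM_2}{\sigma}+k(k+1)M_1\|\x^*\|,
\end{equation*}
where we used $\sum_{i=1}^k\frac{2i}{i+1}\le 2k$ and $\sum_{i=1}^k 2i=k(k+1)$.

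Finally, we divide by $k(k+1)$. Since the weights $\frac{2i}{k(k+1)}$ sum to one, Jensen's inequality for the convex $F_{\tau}$ bounds $\mathbb{E}\bigl[F_{\tau}\bigl(\sum_{i=1}^{k}\frac{2i}{k(k+1)}\x^i\bigr)-F_{\tau}^*\bigr]$ by the weighted average of $\mathbb{E}(F_{\tau}^i-F_{\tau}^*)$. This gives $\frac{2M_2}{\sigma(k+1)}+M_1\|\x^*\|$.

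The only delicate step is the first one. We must justify the conditional-expectation bound on $\langle\tilde{\d}^k,\x^*-\x^k\rangle$, and we must use $\|\x^{k+1}-\z^k\|\le\beta_k\|\tilde{\d}^k\|$ together with $\mathbb{E}\|\tilde{\d}^k\|\le M_1$, rather than the deterministic bound $L_{\tau}$. After that, the rest is the choice of the weights $k(k+1)$ and $k$, which make the recursions telescope.
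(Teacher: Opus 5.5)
Your proposal is correct and is essentially the paper's proof. It uses the same one-step estimate with the strong-convexity term giving the factor $1-\sigma\beta_k=\frac{k-1}{k+1}$, multiplies by $k(k+1)$ so the recursion telescopes, drops the nonpositive term $-2\beta_k\mathbb{E}(F_{\tau}^k-F_{\tau}^*)$ for the iterate bound, and applies Jensen's inequality with weights $\frac{2i}{k(k+1)}$. The differences are only cosmetic: you shift by $\frac{2M_1\|\x^*\|}{\sigma}$ before telescoping, where the paper sums the terms $\frac{4iM_1\|\x^*\|}{\sigma}$ directly, and your division by $\beta_k$ followed by multiplication by $k$ equals the paper's multiplication by $k(k+1)$ up to the constant factor $\frac{\sigma}{2}$.
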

    
	\begin{proof}
		Using the similar proof as in \eqref{le3-2}, we have 
		\begin{equation*}\label{th3-10}
			\begin{aligned}
				&\mathbb{E}\left(\|\x^{k+1}-\x^*\|^2\big|\x^k\right)\\
				&\leq \|\x^k-\x^*\|^2+\beta_k^2\mathbb{E}\left(\|\tilde{\d}^k\|^2 \big|\x^k\right)
				+2\beta_k \|\x^*\|\mathbb{E}\left(\|\tilde{\d}^k\| \big|\x^k\right)+
				2\beta_k  \langle \tilde{\d}^k,\x^*-\x^k \rangle. 
			\end{aligned}
		\end{equation*}
		
		Taking expectation on both sides with respect to $\x^k$ of \eqref{th3-7} and considering that  
		$F_{\tau}(\x)$ is $\sigma$-strongly convex and $\beta_k=\frac{2}{\sigma(k+1)}$, we have  
		\begin{equation}\label{th3-7}
			\begin{aligned}
				&\mathbb{E}\left(\|\x^{k+1}-\x^*\|^2\right)\\
				&\leq\left(1-\sigma\beta_k\right)\mathbb{E}\left( \|\x^k-\x^*\|^2\right) +\beta_k^2M_2+2\beta_k \|\x^*\|M_1+
				2\beta_k\mathbb{E}\left( F_{\tau}^*-F_{\tau}(\x^k)\right)\\
				&=\frac{k-1}{k+1}\mathbb{E}\left( \|\x^k-\x^*\|^2\right) +\frac{4M_2}{\sigma^2(k+1)^2}+\frac{4M_1\|\x^*\|}{\sigma(k+1)} +
				\frac{4}{\sigma(k+1)}\mathbb{E}\left( F_{\tau}^*-F_{\tau}(\x^k)\right)\\
				&\leq \frac{k-1}{k+1}\mathbb{E}\left( \|\x^k-\x^*\|^2\right) +\frac{4M_2}{\sigma^2(k+1)^2}+\frac{4M_1\|\x^*\|}{\sigma(k+1)}.
			\end{aligned}
		\end{equation}
		
		Multiplying both sides of the above inequality by $k(k+1)$, yields that 
		\begin{equation*}
			\begin{aligned}
				0&\leq k(k-1)\mathbb{E}\left( \|\x^k-\x^*\|^2\right)-k(k+1)\mathbb{E}\left(\|\x^{k+1}-\x^*\|^2\right)+
				\frac{4M_2k}{\sigma^2(k+1)}+\frac{4kM_1\|\x^*\|}{\sigma}\\
				&\leq k(k-1)\mathbb{E}\left( \|\x^k-\x^*\|^2\right)-k(k+1)\mathbb{E}\left(\|\x^{k+1}-\x^*\|^2\right)+
				\frac{4M_2}{\sigma^2}+\frac{4kM_1\|\x^*\|}{\sigma}.
			\end{aligned}
		\end{equation*}
		Summing the above inequality from $1$ to $k$ implies that 
		\begin{equation*}
			k(k+1)\mathbb{E}\left(\|\x^{k+1}-\x^*\|^2\right)\leq \frac{4kM_2}{\sigma^2}+\frac{2k(k+1)M_1\|\x^*\|}{\sigma}, 
		\end{equation*}
		which means \eqref{th3-11} holds.  
		
		From \eqref{th3-7}, we have 
		\begin{equation*}
			\begin{aligned}
				&\mathbb{E}\left(\|\x^{k+1}-\x^*\|^2\right)\\
				&\leq\frac{k-1}{k+1}\mathbb{E}\left( \|\x^k-\x^*\|^2\right) +\frac{4M_2}{\sigma^2(k+1)^2}+\frac{4M_1\|\x^*\|}{\sigma(k+1)} +
				\frac{4}{\sigma(k+1)}\mathbb{E}\left( F_{\tau}^*-F_{\tau}(\x^k)\right).
			\end{aligned}
		\end{equation*}
		Multiplying both sides of the above inequality by $k(k+1)$ and reformulating it, we have 
		\begin{equation*}
			\begin{aligned}
				&\frac{4k}{\sigma}\mathbb{E}\left(F_{\tau}(\x^k)- F_{\tau}^*\right)\\
				&\leq  k(k-1)\mathbb{E}\left( \|\x^k-\x^*\|^2\right)-k(k+1)\mathbb{E}\left(\|\x^{k+1}-\x^*\|^2\right)+
				\frac{4M_2k}{\sigma^2(k+1)}+\frac{4kM_1\|\x^*\|}{\sigma}\\
				&\leq k(k-1)\mathbb{E}\left( \|\x^k-\x^*\|^2\right)-k(k+1)\mathbb{E}\left(\|\x^{k+1}-\x^*\|^2\right)+
				\frac{4M_2}{\sigma^2}+\frac{4kM_1\|\x^*\|}{\sigma}.
			\end{aligned}
		\end{equation*}
		Summing the above inequality from $1$ to $k$ and using Jensen's inequality yields that 
		\begin{equation*}
			\mathbb{E}\left[F_{\tau}\left(\sum_{i=1}^{k}\frac{2i}{k(k+1)}\x^i\right)-F_{\tau}^*\right]\leq 
			\frac{2M_2}{\sigma(k+1)}+M_1\|\x^*\|.
		\end{equation*}
The proof is finished. \end{proof}

\section{Conclusion}	\label{Section6}
In this paper, we investigate the exact penalization theory and derive the new bounds for cardinality-constrained optimization (CCO) problems subject to general equality and inequality constraints. In particular, under the CC-MFCQ and CC-pseudonormality, we establish the optimality conditions and local exact penalization properties. To solve the resulting exact penalization problem, we develop the projected subgradient method and the stochastic projected subgradient method. Our analysis yields  tighter convergence  tailored to the penalized cardinality structure. 

In the future, we are interested in exploring the local exact penalization results under some  weaker constraint qualifications than pseudonormality. Besides, we will design more efficient algorithms to handle the exact penalization problem with fast convergence rates.

\paragraph{Acknowledgement}

This work was supported in part by the National Natural Science Foundation of China (Nos. 12271309, 12371306).

\bibliographystyle{elsarticle-num}
\bibliography{mybibfile}

@article{bruckstein2009sparse,
	title={From sparse solutions of systems of equations to sparse modeling of signals and images},
	author={Bruckstein, Alfred M and Donoho, David L and Elad, Michael},
	journal={SIAM Review},
	volume={51},
	number={1},
	pages={34--81},
	year={2009},
	publisher={SIAM}
}

@article{wang2013multi,
  title={Multi-parameter {T}ikhonov regularization with the $\ell_0$ sparsity constraint},
  author={Wang, Wei and Lu, Shuai and Mao, Heng and Cheng, Jin},
  journal={Inverse Problems},
  volume={29},
  number={6},
  pages={065018},
  year={2013},
  publisher={IOP Publishing}
}

@article{zou2018selective,
	title={A selective overview of sparse principal component analysis},
	author={Zou, Hui and Xue, Lingzhou},
	journal={Proceedings of the IEEE},
	volume={106},
	number={8},
	pages={1311--1320},
	year={2018},
	publisher={IEEE}
}

@article{hazimeh2023grouped,
  title={Grouped variable selection with discrete optimization: Computational and statistical perspectives},
  author={Hazimeh, Hussein and Mazumder, Rahul and Radchenko, Peter},
  journal={The Annals of Statistics},
  volume={51},
  number={1},
  pages={1--32},
  year={2023},
  publisher={Institute of Mathematical Statistics}
}

@article{bertsimas2022scalable,
	title={A scalable algorithm for sparse portfolio selection},
	author={Bertsimas, Dimitris and Cory-Wright, Ryan},
	journal={INFORMS Journal on Computing},
	volume={34},
	number={3},
	pages={1489--1511},
	year={2022},
	publisher={INFORMS}
}

@article{anis2025end,
  title={End-to-end, decision-based, cardinality-constrained portfolio optimization},
  author={Anis, Hassan T and Kwon, Roy H},
  journal={European Journal of Operational Research},
  volume={320},
  number={3},
  pages={739--753},
  year={2025},
  publisher={Elsevier}
}

@article{he2023structured,
  title={Structured pruning for deep convolutional neural networks: A survey},
  author={He, Yang and Xiao, Lingao},
  journal={IEEE Transactions on Pattern Analysis and Machine Intelligence},
  volume={46},
  number={5},
  pages={2900--2919},
  year={2024},
  publisher={IEEE}
}

@inproceedings{benbaki2023fast,
  title={Fast as chita: Neural network pruning with combinatorial optimization},
  author={Benbaki, Riade and Chen, Wenyu and Meng, Xiang and Hazimeh, Hussein and Ponomareva, Natalia and Zhao, Zhe and Mazumder, Rahul},
  booktitle={International Conference on Machine Learning},
  pages={2031--2049},
  year={2023},
  organization={PMLR}
}

@article{tillmann2024cardinality,
	title={Cardinality minimization, constraints, and regularization: A survey},
	author={Tillmann, Andreas M and Bienstock, Daniel and Lodi, Andrea and Schwartz, Alexandra},
	journal={SIAM Review},
	volume={66},
	number={3},
	pages={403--477},
	year={2024},
	publisher={SIAM}
}

@article{bonami2009exact,
  title={An exact solution approach for portfolio optimization problems under stochastic and integer constraints},
  author={Bonami, Pierre and Lejeune, Miguel A},
  journal={Operations Research},
  volume={57},
  number={3},
  pages={650--670},
  year={2009},
  publisher={INFORMS}
}

@article{liang2024relaxed,
  title={Relaxed method for optimization problems with cardinality constraints},
  author={Liang, Yan-Chao and Lin, Gui-Hua},
  journal={Journal of Global Optimization},
  volume={88},
  number={2},
  pages={359--375},
  year={2024},
  publisher={Springer}
}

@article{burdakov2016mathematical,
	title={Mathematical programs with cardinality constraints: reformulation by complementarity-type constraints and a regularization method},
	author={Burdakov, Oleg and Kanzow, Christian and Schwartz, Alexandra},
	journal={SIAM Journal on Optimization},
	volume={26},
	number={1},
	pages={397--425},
	year={2016},
	publisher={SIAM}
}

@article{vcervinka2016constraint,
	title={Constraint qualifications and optimality conditions for optimization problems with cardinality constraints},
	author={{\v{C}}ervinka, Michal and Kanzow, Christian and Schwartz, Alexandra},
	journal={Mathematical Programming},
	volume={160},
	pages={353--377},
	year={2016},
	publisher={Springer}
}

@article{kanzow2021augmented,
	title={An augmented {L}agrangian method for cardinality-constrained optimization problems},
	author={Kanzow, Christian and Raharja, Andreas B and Schwartz, Alexandra},
	journal={Journal of Optimization Theory and Applications},
	volume={189},
	number={3},
	pages={793--813},
	year={2021},
	publisher={Springer}
}

@article{kanzow2025sparse,
  title={The sparse (st) optimization problem: reformulations, optimality, stationarity, and numerical results},
  author={Kanzow, Christian and Schwartz, Alexandra and Wei{\ss}, Felix},
  journal={Computational Optimization and Applications},
  volume={90},
  number={1},
  pages={77--112},
  year={2025},
  publisher={Springer}
}

@article{pan2017convergent,
	title={A convergent iterative hard thresholding for nonnegative sparsity optimization},
	author={Pan, Lili and Zhou, Shenglong and Xiu, Naihua and Qi, Hou-Duo},
	journal={Pacific Journal of Optimization},
	volume={13},
	number={2},
	pages={325--353},
	year={2017}
}

@article{beck2013sparsity,
	title={Sparsity constrained nonlinear optimization: Optimality conditions and algorithms},
	author={Beck, Amir and Eldar, Yonina C},
	journal={SIAM Journal on Optimization},
	volume={23},
	number={3},
	pages={1480--1509},
	year={2013},
	publisher={SIAM}
}

@article{beck2015minimization,
	title={On the minimization over sparse symmetric sets: projections, optimality conditions, and algorithms},
	author={Beck, Amir and Hallak, Nadav},
	journal={Mathematics of Operations Research},
	volume={41},
	number={1},
	pages={196--223},
	year={2015},
	publisher={INFORMS}
}

@article{pan2017restricted,
	title={Restricted robinson constraint qualification and optimality for cardinality-constrained cone programming},
	author={Pan, Lili and Luo, Ziyan and Xiu, Naihua},
	journal={Journal of Optimization Theory and Applications},
	volume={175},
	number={1},
	pages={104--118},
	year={2017},
	publisher={Springer}
}

@article{zhao2022lagrange,
	title={A Lagrange-Newton algorithm for sparse nonlinear programming},
	author={Zhao, Chen and Xiu, Naihua and Qi, Houduo and Luo, Ziyan},
	journal={Mathematical Programming},
	volume={195},
	number={1},
	pages={903--928},
	year={2022},
	publisher={Springer}
}

@article{kan2024second,
	title={Second-Order Conditions for the Existence of Augmented {L}agrange Multipliers for Sparse Optimization},
	author={Kan, Chao and Song, Wen},
	journal={Journal of Optimization Theory and Applications},
	volume={201},
	number={1},
	pages={103--129},
	year={2024},
	publisher={Springer}
}

@article{li2025sparse,
	title={Sparse Quadratically Constrained Quadratic Programming via Semismooth Newton Method},
	author={Li, Shuai and Zhou, Shenglong and Luo, Ziyan},
	journal={arXiv preprint arXiv:2503.15109},
	year={2025}
}

@book{nocedal2006numerical,
  title={Numerical Optimization},
  author={Nocedal, Jorge and Wright, Stephen J},
  year={2006},
  publisher={Springer}
}

@article{chen2016penalty,
	title={Penalty methods for a class of non-{Lipschitz} optimization problems},
	author={Chen, Xiaojun and Lu, Zhaosong and Pong, Ting Kei},
	journal={SIAM Journal on Optimization},
	volume={26},
	number={3},
	pages={1465--1492},
	year={2016},
	publisher={SIAM}
}

@article{guo2018necessary,
  title={Necessary optimality conditions and exact penalization for non-Lipschitz nonlinear programs},
  author={Guo, Lei and Ye, Jane J},
  journal={Mathematical Programming},
  volume={168},
  number={1},
  pages={571--598},
  year={2018},
  publisher={Springer}
}

@article{xiao2024optimality,
  title={Optimality conditions and constraint qualifications for cardinality constrained optimization problems},
  author={Xiao, Zhuoyu and Jane, J Ye},
  journal={Numerical Algebra, Control and Optimization},
  volume={14},
  number={3},
  pages={614--635},
  year={2024},
  publisher={Numerical Algebra, Control and Optimization}
}

@article{jacques2013robust,
	title={Robust 1-bit compressive sensing via binary stable embeddings of sparse vectors},
	author={Jacques, Laurent and Laska, Jason N and Boufounos, Petros T and Baraniuk, Richard G},
	journal={IEEE Transactions on Information Theory},
	volume={59},
	number={4},
	pages={2082--2102},
	year={2013},
	publisher={IEEE}
}

@article{liu2019one,
	title={One-bit compressive sensing with projected subgradient method under sparsity constraints},
	author={Liu, Dekai and Li, Song and Shen, Yi},
	journal={IEEE Transactions on Information Theory},
	volume={65},
	number={10},
	pages={6650--6663},
	year={2019},
	publisher={IEEE}
}

@article{li2023adaptive,
	title={Adaptive iterative hard thresholding for least absolute deviation problems with sparsity constraints},
	author={Li, Song and Liu, Dekai and Shen, Yi},
	journal={Journal of Fourier Analysis and Applications},
	volume={29},
	number={1},
	pages={5},
	year={2023},
	publisher={Springer}
}

@article{nguyen2017linear,
	title={Linear convergence of stochastic iterative greedy algorithms with sparse constraints},
	author={Nguyen, Nam and Needell, Deanna and Woolf, Tina},
	journal={IEEE Transactions on Information Theory},
	volume={63},
	number={11},
	pages={6869--6895},
	year={2017},
	publisher={IEEE}
}

@article{zhou2018efficient,
	title={Efficient stochastic gradient hard thresholding},
	author={Zhou, Pan and Yuan, Xiaotong and Feng, Jiashi},
	journal={Advances in Neural Information Processing Systems},
	volume={31},
	year={2018},
	pages={1988--1997},
}

@article{shang2021efficient,
	title={Efficient gradient support pursuit with less hard thresholding for cardinality-constrained learning},
	author={Shang, Fanhua and Wei, Bingkun and Liu, Hongying and Liu, Yuanyuan and Zhou, Pan and Gong, Maoguo},
	journal={IEEE Transactions on Neural Networks and Learning Systems},
	volume={33},
	number={12},
	pages={7806--7817},
	year={2021},
	publisher={IEEE}
}

@article{li2024stochastic,
	title={Stochastic {IHT} with stochastic {P}olyak step-size for sparse signal recovery},
	author={Li, Changhao and Ma, Zhixin and Sun, Dazhi and Zhang, Guoming and Wen, Jinming},
	journal={IEEE Signal Processing Letters},
	year={2024},
	publisher={IEEE}
}

@article{alber1998projected,
	title={On the projected subgradient method for nonsmooth convex optimization in a {H}ilbert space},
	author={Alber, Ya I and Iusem, Alfredo N and Solodov, Mikhail V},
	journal={Mathematical Programming},
	volume={81},
	pages={23--35},
	year={1998},
	publisher={Springer}
}

@article{auslender2009projected,
	title={Projected subgradient methods with non-{E}uclidean distances for non-differentiable convex minimization and variational inequalities},
	author={Auslender, Alfred and Teboulle, Marc},
	journal={Mathematical Programming},
	volume={120},
	pages={27--48},
	year={2009},
	publisher={Springer}
}

@book{rockafellar1998variational,
	title={Variational Analysis},
	author={Rockafellar, R Tyrrell and Wets, Roger JB},
	year={1998},
	publisher={Springer}
}

@article{pan2017optimality,
  title={Optimality conditions for sparse nonlinear programming},
  author={Pan, LiLi and Xiu, NaiHua and Fan, Jun},
  journal={Science China Mathematics},
  volume={60},
  number={5},
  pages={759--776},
  year={2017},
  publisher={Springer}
}

@article{andreani2012relaxed,
  title={A relaxed constant positive linear dependence constraint qualification and applications},
  author={Andreani, Roberto and Haeser, Gabriel and Schuverdt, Mar{\'\i}a Laura and Silva, Paulo JS},
  journal={Mathematical Programming},
  volume={135},
  number={1},
  pages={255--273},
  year={2012},
  publisher={Springer}
}

@article{guo2014new,
  title={New results on constraint qualifications for nonlinear extremum problems and extensions},
  author={Guo, Lei and Zhang, Jin and Lin, Gui-Hua},
  journal={Journal of Optimization Theory and Applications},
  volume={163},
  number={3},
  pages={737--754},
  year={2014},
  publisher={Springer}
}

@article{bertsekas2002pseudonormality,
  title={Pseudonormality and a {L}agrange multiplier theory for constrained optimization},
  author={Bertsekas, Dimitri P and Ozdaglar, Asuman E},
  journal={Journal of Optimization Theory and Applications},
  volume={114},
  number={2},
  pages={287--343},
  year={2002},
  publisher={Springer}
}

@article{allen1987generalization,
	title={A generalization of {P}olyak's convergence result for subgradient optimization},
	author={Allen, Ellen and Helgason, Richard and Kennington, Jeffery and Shetty, Bala},
	journal={Mathematical Programming},
	volume={37},
	pages={309--317},
	year={1987},
	publisher={Springer}
}

@article{boyd2003subgradient,
  title={Subgradient methods},
  author={Boyd, Stephen and Xiao, Lin and Mutapcic, Almir},
  journal={lecture notes of EE392o, Stanford University, Autumn Quarter},
  volume={2004},
  number={01},
  pages={175},
  year={2003}
}

\end{document}